\numberwithin{equation}{section}
\theoremstyle{plain}
\newtheorem{theorem}{Theorem}[section]
\newtheorem{lemma}[theorem]{Lemma}
\theoremstyle{definition}
\theoremstyle{remark}
\newcommand{\argmin}{\operatornamewithlimits{argmin}}
\newcommand{\defeq}{\stackrel{\text{def}}{=}}
\newcolumntype{?}{!{\vrule width 1.5pt}}
\begin{document}

%% *** Frontmatter ***

\begin{frontmatter}
\title{Supplementary Information for ``An Axiomatisation of Error Intolerant Estimation''}
\runtitle{Error Intolerant Estimation (SI)}

\begin{aug}
%%%%%%%%%%%%%%%%%%%%%%%%%%%%%%%%%%%%%%%%%%%%%%%
%% Only one address is permitted per author. %%
%% Only division, organization and e-mail is %%
%% included in the address.                  %%
%% Additional information can be included in %%
%% the Acknowledgments section if necessary. %%
%% ORCID can be inserted by command:         %%
%% \orcid{0000-0000-0000-0000}               %%
%%%%%%%%%%%%%%%%%%%%%%%%%%%%%%%%%%%%%%%%%%%%%%%
\author[A]{\fnms{Michael}~\snm{Brand}\ead[label=e1]{michael.brand@rmit.edu.au}\orcid{0000-0001-9447-4933}}
%%%%%%%%%%%%%%%%%%%%%%%%%%%%%%%%%%%%%%%%%%%%%%
%% Addresses                                %%
%%%%%%%%%%%%%%%%%%%%%%%%%%%%%%%%%%%%%%%%%%%%%%
\address[A]{School of Computing Technologies,
RMIT University\printead[presep={,\ }]{e1}}
\runauthor{M. Brand}
\end{aug}

% \begin{abstract}
% \end{abstract}

\end{frontmatter}

\appendix

\section{Supplementary proofs}\label{S:proofs}

We provide in this section complete proofs for the claims of the main paper.

\subsection{The EIC estimator}

\begin{theorem}[Theorem 2.7 of the main text]\label{T:tcontinuous}
Let $(\boldsymbol{x},\boldsymbol{\theta})$ be an elementary
$\boldsymbol{\theta}$-con\-tin\-u\-ous estimation problem over an open set $\Theta$,
let $L$ be a smooth loss function discriminative for it,
such that $\hat{\theta}^L_{\text{EIC}}$ is a well-defined set estimator
on $(\boldsymbol{x},\boldsymbol{\theta})$,
and let $\hat{\theta}_L$ be a point estimator satisfying AIA with respect to
$L$. For all $x\in X$,
\[
\hat{\theta}_L(x)\in \hat{\theta}^L_{\text{EIC}}(x).
\]
\end{theorem}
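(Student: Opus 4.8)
The plan is to mirror the proof of Theorem~\ref{T:MAP}, with the difference that in the $\boldsymbol{\theta}$-continuous case $\max_{\theta}\mathbf{P}(\theta|x)=0$, so $\hat\theta^L_{\text{EIC}}(x)=\argmax_\theta f(\theta|x)/\sqrt{|H_L^\theta|}$, and the limit $\lim_{\epsilon\to 0}V_\epsilon(\theta)=\mathbf{P}(\theta|x)$ used there is identically zero; it must be replaced by a sharp first-order asymptotic for the rate at which $V_\epsilon(\theta)$ vanishes. Fix $x$; for a risk attitude spectrum $\mathcal{T}=\{T_\epsilon\}$ define $A_\epsilon=S_\epsilon\circ T_\epsilon$ exactly as in the proof of Theorem~\ref{T:MAP}, so that minimising $\mathbf{E}[T_\epsilon(L(\boldsymbol{\theta},\theta))|x]$ is equivalent to maximising $V_\epsilon(\theta)=\mathbf{E}[A_\epsilon(L(\boldsymbol{\theta},\theta))|x]=\int_{\Theta}A_\epsilon(L(\theta',\theta))f(\theta'|x)\,d\theta'$ (the argument is agnostic to whether $\boldsymbol{x}$ is discrete or continuous). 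The structural facts needed about $A_\epsilon$ are that it is continuous, weakly decreasing (composition of the weakly increasing $T_\epsilon$ with the decreasing affine $S_\epsilon$), satisfies $A_\epsilon(0)=1$, takes values in $[0,1]$, and vanishes on $[\epsilon,\infty)$.

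The main claim to establish is that for every $\theta\in\Theta$,
\[
V_\epsilon(\theta)=c_\epsilon\,\frac{f(\theta|x)}{\sqrt{|H_L^\theta|}}\,(1+o(1))\qquad(\epsilon\to 0),
\]
where $c_\epsilon>0$ depends on $\mathcal{T}$ but not on $\theta$. To obtain this I would first use discriminativity to observe that the sublevel sets $\{\theta':L(\theta',\theta)<s\}$ shrink to $\{\theta\}$ as $s\to 0$, so only the local behaviour of $L$ near its global minimiser $\theta$ matters; since $L$ is smooth, $L(\cdot,\theta)$ is $C^3$ with $L(\theta,\theta)=0$, vanishing gradient at $\theta'=\theta$ (interior minimum of an open $\Theta$), and $H_L^\theta\succeq 0$, and a second-order Taylor expansion with cubic remainder gives, at the non-degenerate points ($H_L^\theta\succ 0$),
\[
\mu(\{\theta':L(\theta',\theta)<s\})=V_M\,(2s)^{M/2}\,|H_L^\theta|^{-1/2}\,(1+O(s^{1/2}))\qquad(s\to 0),
\]
with $V_M$ the volume of the unit Euclidean ball. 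Then I would apply the layer-cake identity $\int_{\Theta}A_\epsilon(L(\theta',\theta))\,d\theta'=\int_0^1\mu(\{\theta':L(\theta',\theta)<\ell_\epsilon(u)\})\,du$, where $\ell_\epsilon(u)=\sup\{t:A_\epsilon(t)>u\}\le\epsilon$; this separates the $\theta$-dependence ($|H_L^\theta|^{-1/2}$) from the $\mathcal T$-dependence, yielding $\int_{\Theta}A_\epsilon(L(\theta',\theta))\,d\theta'=c_\epsilon|H_L^\theta|^{-1/2}(1+O(\epsilon^{1/2}))$ with $c_\epsilon:=V_M\int_0^1(2\ell_\epsilon(u))^{M/2}\,du>0$. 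Replacing $f(\theta'|x)$ by $f(\theta|x)$ on the shrinking support (legitimate since $f(\cdot|x)$ is continuous and $f(\theta|x)>0$, as $(\boldsymbol{x},\boldsymbol{\theta})$ has support $X\times\Theta$) then gives the displayed asymptotic; the interchange is justified by dominated convergence since $A_\epsilon\le 1$ and $f(\cdot|x)$ is integrable.

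With the asymptotic in hand the argument closes as in Theorem~\ref{T:MAP}. For any $\theta_1,\theta_2\in\Theta$, the $c_\epsilon$ cancels and $V_\epsilon(\theta_1)/V_\epsilon(\theta_2)\to\bigl(f(\theta_1|x)/\sqrt{|H_L^{\theta_1}|}\bigr)/\bigl(f(\theta_2|x)/\sqrt{|H_L^{\theta_2}|}\bigr)$. Suppose for contradiction that $\hat\theta_L(x)\notin\hat\theta^L_{\text{EIC}}(x)$; since $\hat\theta^L_{\text{EIC}}$ is well-defined on $(\boldsymbol{x},\boldsymbol{\theta})$, its value at $x$ is a nonempty finite set, so choose $\theta^*\in\hat\theta^L_{\text{EIC}}(x)$, which attains $\max_\theta f(\theta|x)/\sqrt{|H_L^\theta|}$, whence $f(\hat\theta_L(x)|x)/\sqrt{|H_L^{\hat\theta_L(x)}|}<f(\theta^*|x)/\sqrt{|H_L^{\theta^*}|}$. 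Then $V_\epsilon(\hat\theta_L(x))<V_\epsilon(\theta^*)$ for all sufficiently small $\epsilon$, i.e.\ $\hat\theta_L(x)$ is EI-inferior relative to $\theta^*$ under this spectrum; as this holds for \emph{every} $\mathcal{T}$, no spectrum witnesses its non-inferiority, contradicting AIA.

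I expect the main obstacle to be making the sublevel-volume asymptotic rigorous, and in particular handling degenerate points where $H_L^\theta$ is singular (so the EIC objective is $+\infty$ there). If $|H_L^{\hat\theta_L(x)}|=0$ then $\hat\theta_L(x)$ already attains the value $+\infty$ and lies in $\hat\theta^L_{\text{EIC}}(x)$, so there is nothing to prove; otherwise, if the competitor $\theta^*$ has $|H_L^{\theta^*}|=0$, one argues that a $C^3$ nonnegative function with vanishing gradient and singular Hessian grows sub-quadratically along its null directions, so its sublevel sets are asymptotically fatter than $s^{M/2}$ (while still bounded, by discriminativity), giving $V_\epsilon(\theta^*)/V_\epsilon(\hat\theta_L(x))\to\infty$ and again forcing EI-inferiority of $\hat\theta_L(x)$. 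A minor secondary point is that $\hat\theta_L(x)\in\Theta$, which is forced by $L(\boldsymbol{\theta},\hat\theta_L(x))$ appearing in the AIA condition, with openness of $\Theta$ ensuring the Taylor expansion is taken at an interior point.
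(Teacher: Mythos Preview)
Your proposal is correct and follows essentially the same approach as the paper: normalise to $A_\epsilon$, localise via discriminativity, Taylor-expand $L$ to second order, and extract the factor $|H_L^\theta|^{-1/2}$ from the resulting integral so that a $\theta$-independent constant cancels in the ratio $V_\epsilon(\theta_1)/V_\epsilon(\theta_2)$, which then converges to the EIC-objective ratio. The only difference is cosmetic---you isolate $|H_L^\theta|^{-1/2}$ via the layer-cake identity and a sublevel-volume asymptotic, whereas the paper does so by the Jacobian substitution $\omega=(H_L^\theta)^{1/2}(\theta'-\theta)$; your explicit handling of the degenerate-Hessian case is a detail the paper's sketch defers to the supplement.
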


\begin{proof}
For a choice of a risk attitude spectrum, $\mathcal{T}=\{T_\epsilon\}$, define
$\mathcal{A}=\{A_\epsilon = S_\epsilon \circ T_\epsilon\}$ such that each
$S_\epsilon$ is an affine function mapping $T_\epsilon(0)$ to $1$ and the
maximum of $T_\epsilon$ to $0$. Minimising the expected loss
$\mathbf{E}[T_\epsilon(L(\boldsymbol{\theta},\theta))|\boldsymbol{x}=x]$
for choosing $\hat{\theta}_L(x)=\theta$ at a given
$\epsilon$ and a given $x$ is equivalent to maximising the expected utility
$\mathbf{E}[A_\epsilon(L(\boldsymbol{\theta},\theta))|\boldsymbol{x}=x]$.

When computing this expected utility,
\begin{equation}\label{Eq:AkL}
\mathbf{E}[A_\epsilon(L(\boldsymbol{\theta}, \theta))|x] = \int_\Theta f(\theta'|x) A_\epsilon(L(\theta',\theta)) \text{d}\theta',
\end{equation}
for $\epsilon$ values tending to zero,
one only needs to consider the integral over $B(\theta,\delta)$, the
(closed set) ball of
radius $\delta$ around $\theta$, for any $\delta>0$, as for a sufficiently
small $\epsilon$, the rest of the integral values will be zero by the
discriminativity assumption. We will, in particular, assume that any chosen
$\delta$ is small enough so that $B(\theta,\delta)\subseteq\Theta$.

Because $L$ is smooth,
the second derivative (i.e., the Hessian) $H_L^{\theta}$ is defined and
finite everywhere, and its own derivative (i.e., the third derivative) is
continuous. Thus, this third derivative is bounded inside the closed set
$B(\theta,\delta)$. Let $m$ be its maximum absolute value in any direction.

Using a second-order Taylor approximation,
we can therefore write, for $\theta'\in B(\theta,\delta)$,
\begin{equation}\label{Eq:Taylor}
L(\theta',\theta)=\frac{1}{2}(\theta'-\theta)^T H_L^{\theta} (\theta'-\theta)\pm \frac{m}{6} |\theta'-\theta|^3.
\end{equation}

Let $H^\theta_\text{min} = H^\theta_L - \frac{m\delta}{3} I_M$ and
$H^\theta_\text{max} = H^\theta_L + \frac{m\delta}{3} I_M$, where $I_M$ is the
$M\times M$ identity matrix. For
all $\theta'$ values in $B(\theta, \delta)$,
\begin{equation}\label{Eq:Hbounds}
\frac{1}{2}(\theta'-\theta)^T H^{\theta}_\text{min} (\theta'-\theta) \le L(\theta',\theta) \le \frac{1}{2}(\theta'-\theta)^T H^{\theta}_\text{max} (\theta'-\theta).
\end{equation}

Because we also assumed that the posterior distribution $f(\theta|x)$ is
continuous, inside $B(\theta,\delta)$ it is also bounded from above and
below. Let its bounds be $f_{\text{max}}$ and $f_{\text{min}}$.

From \eqref{Eq:Hbounds}, we know that the value of \eqref{Eq:AkL}
can be bounded by
\begin{equation}\label{Eq:Ibounds}
\begin{split}
f_{\text{min}} & \int_{B(\theta,\delta)} A_\epsilon\left(\frac{1}{2}(\theta'-\theta)^T H^\theta_{\text{max}} (\theta'-\theta)\right)\text{d}\theta'
\le \int_\Theta f(\theta'|x) A_\epsilon(L(\theta',\theta)) \text{d}\theta' \\
& \le f_{\text{max}}\int_{B(\theta,\delta)} A_\epsilon\left(\frac{1}{2}(\theta'-\theta)^T H^\theta_{\text{min}} (\theta'-\theta)\right)\text{d}\theta'.
\end{split}
\end{equation}

Because we assumed that $\hat{\theta}^L_{\text{EIC}}$ is well-defined, we
know that $H^\theta_L$ is positive definite, so for a small enough $\delta$
$H^\theta_{\text{min}}$ and $H^\theta_{\text{max}}$ will be, too. Thus,
for $H$ being either $H^\theta_{\text{min}}$ or $H^\theta_{\text{max}}$
there is a positive infimum to the value of
$\frac{1}{2}(\theta'-\theta)^T H (\theta'-\theta)$ outside $B(\theta,\delta)$.

When $\epsilon$ is smaller than this infimum, the integration bounds in
\eqref{Eq:Ibounds} can be switched from $B(\theta,\delta)$ to $\mathbb{R}^M$,
without this impacting the truth of the inequality, because all $\theta'$
values outside $B(\theta,\delta)$ will contribute zero to the integral.

Values of the form
\[
f \int_{\mathbb{R}^M} A_\epsilon\left(\frac{1}{2}(\theta'-\theta)^T H (\theta'-\theta)\right)\text{d}\theta'
\]
can be computed via a Jacobian transformation as
\begin{equation}\label{Eq:prop}
\frac{f}{\sqrt{|H|}}\int_{\mathbb{R}^M} A_\epsilon\left(\frac{1}{2}|\omega|^2\right)\text{d}\omega,
\end{equation}
where the integral is a multiplicative factor independent of $\theta$.

As $\epsilon$, and therefore also $\delta$, tends to zero,
both $|H^\theta_\text{min}|$ and
$|H^\theta_\text{max}|$ tend to $|H^\theta_L|$, and both $f_{\text{min}}$ and
$f_{\text{max}}$ converge to $f(\theta|x)$.

Noting that $f(\theta|x)$ is positive by our assumption on estimation problems,
and that $|H^\theta_L|$ is positive by our assumption that
$\hat{\theta}^L_{\text{EIC}}$ is well-defined, we reach
\[
\lim_{\epsilon\to 0} \frac{\mathbf{E}[A_\epsilon(L(\boldsymbol{\theta}, \theta))|x]}{\int_{\mathbb{R}^M} A_\epsilon\left(\frac{1}{2}|\omega|^2\right)\text{d}\omega} = \frac{f(\theta|x)}{\sqrt{|H^\theta_L|}} > 0.
\]

Suppose, contrary to the claim, that
$\hat{\theta}_L(x)\notin \hat{\theta}^L_{\text{EIC}}(x)$. Because
$\hat{\theta}^L_{\text{EIC}}$ was assumed to be a well-defined set estimator,
we know that $\hat{\theta}^L_{\text{EIC}}(x)$ is not empty. Choose
$\theta^{*}\in\hat{\theta}^L_{\text{EIC}}(x)$.

By its definition, EIC maximises the metric
$f(\theta|x)/\sqrt{|H^\theta_L|}$, so
\[
\lim_{\epsilon\to 0} \frac{\mathbf{E}[A_\epsilon(L(\boldsymbol{\theta}, \hat{\theta}_L(x)))|x]}{\mathbf{E}[A_\epsilon(L(\boldsymbol{\theta}, \theta^{*}))|x]}
=\frac{f(\hat{\theta}_L(x)|x)/\sqrt{\left|H^{\hat{\theta}_L(x)}_L\right|}}{f(\theta^{*}|x)/\sqrt{\left|H^{\theta^{*}}_L\right|}} < 1.
\]
This proves that for every low enough $\epsilon$,
\[
\mathbf{E}[A_\epsilon(L(\boldsymbol{\theta}, \hat{\theta}_L(x)))|x] < \mathbf{E}[A_\epsilon(L(\boldsymbol{\theta}, \theta^{*}))|x],
\]
contradicting our assumption that $\hat{\theta}_L$ satisfies AIA.
\end{proof}

\subsection{EIC estimation over reasonable loss}

Our main theorem for continuous problems is as follows.

\begin{theorem}[Theorem~3.1 of the main text]\label{T:main}
If $(\boldsymbol{x},\boldsymbol{\theta})$ is an elementary continuous
estimation problem
for which $\hat{\theta}_{\text{WF}}$ is a well-defined
set estimator, and if $L$ is a smooth and problem continuous
loss function, sensitive on $(\boldsymbol{x},\boldsymbol{\theta})$'s type,
that satisfies all of IIA, IRP and IRO, then
\[
\hat{\theta}^L_{\text{EIC}}=\hat{\theta}_{\text{WF}}.
\]
\end{theorem}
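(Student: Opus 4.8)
The plan is to show that the EIC estimator over any loss function $L$ satisfying IIA, IRP, and IRO (and which is smooth, problem-continuous, and sensitive) has a Hessian $H_L^\theta$ whose determinant agrees, up to a $\theta$-independent positive multiplicative constant, with $|\mathcal{I}_\theta|$; once this is established, comparing \eqref{Eq:EIC} with the definition of $\hat\theta_\text{WF}$ gives the result immediately, since the constant drops out of the $\argmax$. So the whole theorem reduces to a statement about the second-order Taylor coefficient of $L(\cdot,\theta_0)$ at $\theta_0$, i.e. about the symmetric positive-definite matrix $H_L^{\theta_0}$, as a function of the data distributions $\{P_\theta\}$ and the prior.

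The key steps, in order. First, use IIA to argue that $H_L^{\theta_0}$ can depend only on ``local'' data: by IIA, $L_{(\boldsymbol{x},\boldsymbol{\theta})}(\theta_1,\theta_2)$ depends only on the problem restricted to (a neighbourhood of) $\{\theta_1,\theta_2\}$, so the Hessian at $\theta_0$ is a functional only of the germ of $\theta\mapsto P_\theta$ near $\theta_0$ together with $\Lambda(\theta_0)$; combined with IRO it depends only on the family $\{P_\theta\}$ through representation-independent quantities. Second, use IRP (the change-of-variables computation already displayed in Section~\ref{S:axioms} for EIC, transcribed to the Hessian) to pin down the transformation law: under a diffeomorphism $F$ with Jacobian $J_F$, one gets $H_L^{F(\theta_0)} = J_F(\theta_0)^{-T} H_L^{\theta_0} J_F(\theta_0)^{-1}$ (the same law obeyed by $\mathcal{I}_{\theta_0}$), so $|H_L^{\theta_0}|$ and $|\mathcal{I}_{\theta_0}|$ transform identically and their ratio is a diffeomorphism-invariant scalar. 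Third — the crux — combine IRO with a canonical-form / limiting argument: near $\theta_0$ one can, after a reparametrisation, reduce to a model that locally looks like a Gaussian location family, or invoke problem-continuity to pass to such a limit of elementary problems; for a Gaussian location family the Fisher information is (up to scale) the identity, and by IRP/IRO the Hessian must then also be a scalar multiple of the identity, with the same scalar for all such problems of a given type by IIA plus problem-continuity. Sensitivity guarantees this common scalar is nonzero (so $H_L^{\theta_0}$ is genuinely positive definite, hence EIC/WF are comparable). Fourth, transport this identity along diffeomorphisms using the matched transformation laws from step two to conclude $|H_L^{\theta_0}| = c\,|\mathcal{I}_{\theta_0}|$ for a universal positive constant $c$ depending only on the problem type, for every elementary continuous problem. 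Finally, substitute into \eqref{Eq:EIC}: $\hat\theta^L_\text{EIC}(x)=\argmax_\theta f(\theta|x)/\sqrt{|H_L^\theta|}=\argmax_\theta f(\theta|x)/\sqrt{c\,|\mathcal{I}_\theta|}=\hat\theta_\text{WF}(x)$, and the well-definedness of $\hat\theta_\text{WF}$ as a set estimator carries over.

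The main obstacle I expect is step three: rigorously reducing an arbitrary elementary continuous family to a Gaussian-location ``normal form'' in the relevant limiting sense. IIA and IRP only constrain $H_L^{\theta_0}$ up to the action of diffeomorphism; they do not by themselves fix the absolute value. What forces the answer is IRO together with problem-continuity: IRO lets one deform the observation representation, and one must exhibit a sequence of elementary problems (or a single problem plus a limiting family) whose data distributions converge in measure to those of a Gaussian location model while keeping the germ of $\theta\mapsto P_\theta$ near $\theta_0$ fixed at first order, so that $\mathcal{I}_{\theta_0}$ is preserved and the Hessian is pinned by the Gaussian case. Managing this convergence — ensuring the approximating problems are elementary (three-times differentiable, piecewise continuous), that convergence in measure of the pdfs entails convergence of $\mathcal{I}_{\theta_0}$, and that problem-continuity of $L$ then transfers the computed Hessian value — is the delicate analytic part and is presumably where the bulk of the SI proof lies. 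The earlier steps (the IIA localisation and the IRP transformation law) are essentially bookkeeping, the former following the pattern already sketched for the estimator in Section~\ref{S:axioms}.
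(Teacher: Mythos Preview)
Your overall architecture (reduce to a statement about $H_L^{\theta_0}$, then identify it with a constant multiple of $\mathcal{I}_{\theta_0}$) matches the paper's, but the route you propose through step three is not the paper's route, and as written it has a genuine gap.

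The paper argues as follows. From IIA and IRP together (Lemma~\ref{L:likelihood}) one obtains that $L_{(\boldsymbol{x},\boldsymbol{\theta})}(\theta_1,\theta_2)$ depends only on the pair $(P_{\theta_1},P_{\theta_2})$ and on $M$; note that it is IRP, not IRO, that eliminates dependence on the prior values $\Lambda(\theta_i)$ and on the bare coordinates $\theta_i$, a step your outline leaves implicit. Next (Lemma~\ref{L:r}), IRO combined with problem continuity shows that $L(P,Q)$ depends only on the function $c[P,Q]$, that is, on the distribution under $Q$ of the likelihood ratio $r_{P,Q}=\mathrm{d}P/\mathrm{d}Q$. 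Finally (Lemma~\ref{L:Fisher}), the second $\theta_1$-derivative of $L(P_{\theta_1},P_{\theta_2})$ is expanded directly from this functional form; at $\theta_1=\theta_2=\theta$ one has $r\equiv 1$ deterministically, which kills two of the three terms and reduces the surviving coefficient to a universal constant $\gamma=\Delta_{\text{xx}}(1)$, leaving $H_L^\theta=\gamma\,\mathcal{I}_\theta$. No Gaussian normal form is invoked; the universality of $\gamma$ comes from the triviality of the law of $r$ on the diagonal.

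Your step three instead tries to deform the family, via IRO and a problem-continuity limit, to a Gaussian location model. This cannot work in general. An IRO map $G$ acts on every $P_\theta$ by the same change of variables, and the quantity $c[P_\theta,P_{\theta_0}]$ is invariant under such maps (indeed under the limiting process too, by Lemma~\ref{L:ccont}); hence any IRO-plus-limit sequence preserves all the functions $c[P_\theta,P_{\theta_0}]$, which for a generic family do not coincide with those of a Gaussian location model. More fundamentally, your phrase ``keep the germ of $\theta\mapsto P_\theta$ fixed at first order while converging to Gaussian'' is circular: to know that only the first-order germ (equivalently $\mathcal{I}_{\theta_0}$) matters for $H_L^{\theta_0}$ is exactly the statement to be proved, so it cannot be assumed in constructing the approximating sequence. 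The paper avoids this by first pinning $L$ down to a functional of $c[P,Q]$ and then computing, rather than by comparison to a canonical model.
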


In the main text, we prove the theorem through a progression of lemmas.
While the proof of the main theorem exists in the main text, we supplement it
here with the complete proofs of its underlying lemmas.

Recall that, throughout, $M$ and $N$ always represent the dimensions of the
parameter space and the observation space, respectively.
% for the purpose of this derivation, the dimension of the parameter space, $M$,
% and the dimension of the observation space, $N$, are throughout taken to be
% fixed.

\begin{lemma}[Lemma~3.2 of the main text]\label{L:likelihood}
For $\boldsymbol{\theta}$-continuous estimation problems
$(\boldsymbol{x},\boldsymbol{\theta})$,
if $L$ satisfies both IIA and IRP then
$L_{(\boldsymbol{x},\boldsymbol{\theta})}(\theta_1,\theta_2)$ is a function
only of the data distributions $P_{\theta_1}$ and
$P_{\theta_2}$ and of $M$, the dimension of the parameter space.
\end{lemma}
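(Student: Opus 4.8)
The plan is to extract, from the two axioms, enough ``forgetfulness'' to conclude that $L_{(\boldsymbol{x},\boldsymbol{\theta})}(\theta_1,\theta_2)$ cannot see anything about the problem except the two data distributions $P_{\theta_1}, P_{\theta_2}$ and the ambient dimension $M$. Fix an elementary $\boldsymbol{\theta}$-continuous problem $(\boldsymbol{x},\boldsymbol{\theta})$ and a pair $\theta_1 \ne \theta_2$. The first step is to use IIA to strip away the prior and all data distributions other than those at $\theta_1$ and $\theta_2$: by the definition of IIA, $L_{(\boldsymbol{x},\boldsymbol{\theta})}(\theta_1,\theta_2)$ depends only on the pair $(\theta_1,\Lambda(\theta_1),P_{\theta_1})$ and $(\theta_2,\Lambda(\theta_2),P_{\theta_2})$ — any two problems agreeing on these at the two points in question give the same loss. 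So after this step the only data still ``visible'' to the loss are the parameter labels $\theta_1,\theta_2$, the two prior values, and the two data distributions.

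The second step is to kill the dependence on the prior values $\Lambda(\theta_1),\Lambda(\theta_2)$ and on the numerical identity of the labels $\theta_1,\theta_2$, using IRP. The idea: given a target prior-mass assignment at the two points, I would construct a diffeomorphism $F:\mathbb{R}^M\to\mathbb{R}^M$ that fixes $\theta_1$ and $\theta_2$ as points but has prescribed Jacobian determinants there (locally, $F$ can be made to look like any invertible linear map near each of $\theta_1,\theta_2$, rescaling a small neighbourhood), so that $\boldsymbol{\phi}=F(\boldsymbol{\theta})$ is still an elementary problem but with the prior density at $\theta_1,\theta_2$ multiplied by $1/|J_F(\theta_i)|$ — arbitrary positive factors. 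Since $F$ fixes the two points and IRP gives $L_{(\boldsymbol{x},\boldsymbol{\theta})}(\theta_1,\theta_2)=L_{(\boldsymbol{x},F(\boldsymbol{\theta}))}(F(\theta_1),F(\theta_2)) = L_{(\boldsymbol{x},\boldsymbol{\phi})}(\theta_1,\theta_2)$, and since IIA already told us $L$ only cares about the prior density at those two points, we conclude the loss is the same for any two prior-value assignments; hence it cannot depend on $\Lambda(\theta_1),\Lambda(\theta_2)$ at all. (Here I must be careful that changing only the local Jacobian of $F$ is enough to realise any ratio of prior densities while keeping $F$ a global diffeomorphism and keeping $(\boldsymbol{x},\boldsymbol{\phi})$ elementary — that is the technical heart of the step; bump-function constructions handle it, with the three-times-differentiability of $f(\theta|x)$ preserved because $F$ is smooth.) A second application of IRP, now with diffeomorphisms that move $\theta_1,\theta_2$ to arbitrary target points $\theta_1',\theta_2'$ (any two distinct points can be carried to any other two distinct points by a diffeomorphism of $\mathbb{R}^M$), shows the loss is invariant under relabelling, so it cannot depend on the actual values $\theta_1,\theta_2$ either — only on the data distributions they index.

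Combining the two steps: after IIA the loss depends only on $(\theta_1,\Lambda(\theta_1),P_{\theta_1},\theta_2,\Lambda(\theta_2),P_{\theta_2})$; after the IRP arguments it depends on neither the $\Lambda$-values nor the $\theta$-labels; so it depends only on $P_{\theta_1},P_{\theta_2}$ — and, of course, on $M$, which is not something any diffeomorphism or prior change can alter and which is held fixed throughout (indeed $F$ must be a self-map of $\mathbb{R}^M$, so $M$ is genuinely a free parameter of this family of problems). That is precisely the claim. I would organise the writeup as: (i) IIA reduction, one paragraph; (ii) construction of the prior-altering and label-altering diffeomorphisms and the IRP reductions, the substantive paragraph; (iii) a one-line conclusion. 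The main obstacle I anticipate is entirely in step (ii) — producing, for a prescribed pair of positive density ratios at two fixed points, an honest global diffeomorphism of $\mathbb{R}^M$ that is the identity outside small disjoint neighbourhoods of those points and has the prescribed Jacobian determinants there, while checking that elementariness (three-times continuous differentiability of the transformed posterior, and piecewise continuity in $x$ in the continuous case) is preserved. Everything else is bookkeeping.
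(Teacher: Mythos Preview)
Your proposal is correct and follows essentially the same approach as the paper: first use IIA to reduce the dependence of $L$ to the labels $\theta_1,\theta_2$, the prior values $\Lambda(\theta_1),\Lambda(\theta_2)$, the data distributions $P_{\theta_1},P_{\theta_2}$, and $M$; then use IRP (diffeomorphic reparameterisations) to eliminate dependence on the prior values and on the labels, leaving only $P_{\theta_1},P_{\theta_2}$ and $M$. The paper's proof idea sketches exactly this two-step reduction, merely adding the remark that the case $\theta_1=\theta_2$ is handled trivially by \eqref{Eq:distinct}; your identification of the bump-function construction as the technical heart of the IRP step is accurate and is precisely what the paper's full proof (in the SI) must carry out.
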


\begin{proof}
The IIA axiom is tantamount to stating that
$L_{(\boldsymbol{x},\boldsymbol{\theta})}(\theta_1,\theta_2)$ is dependent
only on the following:
\begin{enumerate}
\item The function's inputs $\theta_1$ and $\theta_2$,
\item The data distributions $P_{\theta_1}$ and $P_{\theta_2}$,
\item The priors $f(\theta_1)$ and $f(\theta_2)$, and
\item The problem's parameter space dimension $M$ (noting that all other
elements of the problem type are already fixed by specifying
the data distributions).
\end{enumerate}

We can assume without loss of generality that $\theta_1\ne\theta_2$, or else
the value of $L(\theta_1,\theta_2)$ can be determined to be zero by
(1.1).

Our first claim is that, due to IRP, $L$ can also not depend on the problem's
prior probability densities $f(\theta_1)$ and $f(\theta_2)$. To show this,
construct a diffeomorphism
$F:\mathbb{R}^M \to \mathbb{R}^M$
in the following way.

Let $(\vec{v}_1,\ldots,\vec{v}_M)$ be an orthogonal basis
for $\mathbb{R}^M$ wherein $\vec{v}_1=\theta_2-\theta_1$.
We design $F$ as
\[
F\left(\theta_1+\sum_{i=1}^M b_i\vec{v}_i\right)=\theta_1+F_1(b_1)\vec{v}_1+\sum_{i=2}^M b_i\vec{v}_i,
\]
where $F_1:\mathbb{R}\to\mathbb{R}$ is a continuous, differentiable function
onto $\mathbb{R}$, with a derivative that is positive everywhere, satisfying
\begin{enumerate}
\item $F_1(0)=0$ and $F_1(1)=1$, and
\item $F_1'(0)=d_0$ and $F_1'(1)=d_1$, for some arbitrary
positive values $d_0$ and $d_1$ to be chosen later.
\end{enumerate}

Such a function is straightforward to construct for any values of $d_0$ and
$d_1$, and by an appropriate choice of these values, it is possible to map
$(\boldsymbol{x},\boldsymbol{\theta})$ into
$(\boldsymbol{x},F(\boldsymbol{\theta}))$ in a way that does not change
$P_{\theta_1}$ or $P_{\theta_2}$, but adjusts $f(\theta_1)$ and $f(\theta_2)$
to any desired positive values.

Lastly, we show that $L(\theta_1,\theta_2)$ can also not depend on the values
of $\theta_1$ and $\theta_2$ other than through $P_{\theta_1}$ and
$P_{\theta_2}$. For this
we once again invoke IRP: by applying a similarity transform on $\Theta$, we
can map any $\theta_1$ and $\theta_2$ values into arbitrary new values, again
without this affecting their respective conditional data distributions.
\end{proof}

\begin{lemma}[Lemma~3.3 of the main text]\label{L:ccont}
The function $c$ is $\mathcal{M}$-continuous for continuous distributions,
in the sense that if both
$\left(f_i\right)_{i\in\mathbb{N}}\xrightarrow{\mathcal{M}} f$ and
$\left(g_i\right)_{i\in\mathbb{N}}\xrightarrow{\mathcal{M}} g$,
where $f$ and $g$ are pdfs and $\left(f_i\right)_{i\in\mathbb{N}}$ and
$\left(g_i\right)_{i\in\mathbb{N}}$ are pdf sequences,
then $\left(c[f_i,g_i]\right)_{i\in\mathbb{N}}\xrightarrow{\mathcal{M}} c[f,g]$.
\end{lemma}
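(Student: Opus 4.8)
The plan is to reduce $\mathcal{M}$-convergence of $c[f_i,g_i]$ to $c[f,g]$ to pointwise convergence of the inverse cumulative distribution functions of the Radon--Nikodym ratios, exploiting the characterisation \eqref{Eq:measure_ratio} of convergence in measure together with the fact that monotone functions converging in distribution converge at every continuity point. First I would recall that, by the reformulation in \eqref{Eq:measure_ratio}, the hypothesis $(f_i)\xrightarrow{\mathcal{M}} f$ means $\mathbf{P}_{\boldsymbol{x}\sim f}(|f(\boldsymbol{x})-f_i(\boldsymbol{x})|\ge\epsilon f(\boldsymbol{x}))\to 0$ for every $\epsilon>0$, i.e. the likelihood ratio $f_i(\boldsymbol{x})/f(\boldsymbol{x})$ tends to $1$ in $f$-probability; similarly $g_i(\boldsymbol{x})/g(\boldsymbol{x})\to 1$ in $g$-probability, hence (using absolute continuity of $f$ and $g$ with respect to each other, which is part of the definition of an estimation problem and is inherited in the limit) also in $f$-probability. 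Consequently the ratio $r_{f_i,g_i}(\boldsymbol{x}) = \frac{f_i(\boldsymbol{x})}{g_i(\boldsymbol{x})} = r_{f,g}(\boldsymbol{x})\cdot\frac{f_i(\boldsymbol{x})/f(\boldsymbol{x})}{g_i(\boldsymbol{x})/g(\boldsymbol{x})}$ converges, under $\boldsymbol{x}\sim g$, in $g$-probability to $r_{f,g}(\boldsymbol{x})$ after multiplying by a factor that tends to $1$ in probability. A Slutsky-type argument then gives that the distribution of $r_{f_i,g_i}(\boldsymbol{x})$ under $g_i$ --- which is what enters $c[f_i,g_i]$ --- converges weakly to the distribution of $r_{f,g}(\boldsymbol{x})$ under $g$, once we also account for the change of sampling measure from $g_i$ to $g$, again controlled by $g_i/g\to 1$ in probability.

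The second step is to pass from weak convergence of these one-dimensional ``ratio distributions'' to $\mathcal{M}$-convergence of their (generalised) inverse CDFs. Writing $F_i$ for the CDF of $r_{f_i,g_i}(\boldsymbol{x})$ under $g_i$ and $F$ for that of $r_{f,g}(\boldsymbol{x})$ under $g$, the function $c[f_i,g_i]$ is exactly the left-continuous generalised inverse (quantile function) of $F_i$, up to the edge-case resolution noted after the definition. A standard fact is that if $F_i\to F$ weakly then the quantile functions $F_i^{-1}(t)\to F^{-1}(t)$ at every $t$ that is a continuity point of $F^{-1}$, equivalently at every $t$ at which $F$ is strictly increasing through the level $t$. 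The set of discontinuity points of $F^{-1}$ on $[0,1]$ corresponds to the flat pieces of $F$, which is a set of values $t$ of Lebesgue measure zero (it is at most countable). Since $\mathcal{M}$-convergence on $[0,1]$ is convergence in Lebesgue measure, pointwise convergence off a null set is more than enough: for any $\epsilon>0$, $\{t : |c[f_i,g_i](t)-c[f,g](t)|\ge\epsilon\}$ is, for large $i$, contained in an arbitrarily small neighbourhood of the (null) bad set, so its Lebesgue measure tends to $0$. I would package this as: pointwise a.e. convergence of a uniformly bounded-in-a-suitable-sense sequence of monotone functions on $[0,1]$ implies convergence in Lebesgue measure, hence $\mathcal{M}$-convergence.

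The main obstacle I expect is the first step --- carefully justifying the weak convergence $F_i\to F$ while simultaneously the sampling measure drifts from $g$ to $g_i$. One has to handle two coupled perturbations (the integrand ratio $r_{f_i,g_i}$ versus $r_{f,g}$, and the measure $g_i$ versus $g$), and make sure that tail effects where $r_{f,g}$ or $f/g$, $g/f$ is very large or very small do not spoil convergence; this is precisely where the remark following \eqref{Eq:measure0}--\eqref{Eq:measure_ratio} is useful, namely that because the limiting measure is normalisable one can truncate $\{x:f(x)>t\}$ to have arbitrarily small mass and reduce everything to a region where the ratios are bounded. A clean way to organise it is: fix $\eta>0$, choose a measurable set $A$ with $\mathbf{P}_{\boldsymbol{x}\sim g}(A)>1-\eta$ on which $r_{f,g}$, $f/g$ and $g/f$ are all bounded and bounded away from $0$; on $A$ the convergences in probability are genuinely uniform-enough to force closeness of the CDFs evaluated at any fixed point; off $A$ the total contribution to either CDF is at most $O(\eta)$ plus a vanishing term coming from $|g_i-g|$. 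Letting $\eta\to 0$ after $i\to\infty$ yields $F_i(r)\to F(r)$ at every continuity point $r$ of $F$, which is exactly weak convergence. A secondary, more bookkeeping-level obstacle is checking that the edge-case resolution in the definition of $c$ (the use of $\inf$ and the behaviour at $t=0$ and $t=1$) does not create discrepancies on a non-null set of $t$; this is handled by noting those edge cases affect $c[\cdot,\cdot]$ only on a set of $t$-values of measure zero, which is invisible to $\mathcal{M}$-convergence.
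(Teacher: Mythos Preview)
Your plan is sound and matches the natural route implied by the paper's (very terse) proof sketch, which simply reads ``By definition of convergence in measure.'' Your two-step decomposition --- first establishing weak convergence of the CDFs $F_i$ of $r_{f_i,g_i}(\boldsymbol{x})$ under $g_i$ to $F$, then passing to a.e.\ convergence of the quantile functions on $[0,1]$ and hence convergence in Lebesgue measure --- is exactly the right structure, and your identification of the main difficulty (the simultaneous drift of integrand and sampling measure, handled by truncation to a set where all ratios are bounded) is accurate.

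Two minor remarks. First, in your final step you do not need any boundedness hypothesis: pointwise convergence almost everywhere on the finite measure space $[0,1]$ already implies convergence in measure, so once you have $F_i^{-1}(t)\to F^{-1}(t)$ off the (at most countable) set of discontinuities of $F^{-1}$ you are done. Second, when you invoke absolute continuity of $f$ and $g$ with respect to each other, note that the lemma as stated does not explicitly assume this; it is implicit in the very definition of $c[P,Q]$ (which requires $r_{P,Q}$ to exist) and in the standing assumption that data distributions in an estimation problem are mutually absolutely continuous, so you should state clearly that you are reading the lemma in that context.
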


\begin{proof}
For any $t_0\in [0,1]$, let $r_0=c[f,g](t_0)$, and let
$t_{\text{min}}$ and $t_{\text{max}}$ be the infimum $t$ and the
supremum $t$, respectively, for which $c[f,g](t)=r_0$.

Because $c[f,g]$ is a monotone increasing function,
$t_{\text{min}}$ is also the supremum $t$ for which
$c[f,g](t)< r_0$ (unless no such $t$ exists, in which case $t_{\text{min}}=0$),
so by definition $t_{\text{min}}$ is the supremum of
$\mathbf{P}_{\boldsymbol{x}\sim g}(r_{f,g}(\boldsymbol{x})\le r)$,
for all $r<r_0$, from which we conclude that
$t_{\text{min}}=\mathbf{P}_{\boldsymbol{x}\sim g}(r_{f,g}(\boldsymbol{x})<r_0)$.

Because $\left(f_i\right)\xrightarrow{\mathcal{M}} f$ and
$\left(g_i\right)\xrightarrow{\mathcal{M}} g$, we can use
(1.3) to determine that using a large enough $i$ both
$f_i(x)/f(x)$ and $g_i(x)/g(x)$ are arbitrarily close to $1$ in all
but a diminishing measure of $X$. Hence,
\begin{equation*}
\lim_{i\to\infty} \mathbf{P}_{\boldsymbol{x}\sim g_i}(r_{f_i,g_i}(\boldsymbol{x})<r_0)
=\lim_{i\to\infty} \mathbf{P}_{\boldsymbol{x}\sim g}(r_{f_i,g_i}(\boldsymbol{x})<r_0)
=\mathbf{P}_{\boldsymbol{x}\sim g}(r_{f,g}(\boldsymbol{x})<r_0)
=t_{\text{min}}.
\end{equation*}

We conclude that for any $t^{+}>t_{\text{min}}$ a large enough $i$ will satisfy
$\mathbf{P}_{\boldsymbol{x}\sim g_i}(r_{f_i,g_i}(\boldsymbol{x})<r_0)< t^{+}$,
and hence
$c[f_i,g_i](t^{+})\ge r_0$. For all such $t^{+}$, and in particular for all
$t^{+}>t_0$,
\begin{equation}\label{Eq:liminf}
\liminf_{i\to\infty} c[f_i,g_i](t^{+})\ge r_0 = c[f,g](t_0).
\end{equation}

A symmetrical analysis on $t_{\text{max}}$ yields that for all $t^{-}<t_0$,
\begin{equation}\label{Eq:limsup}
\limsup_{i\to\infty} c[f_i,g_i](t^{-})\le c[f,g](t_0).
\end{equation}

Consider, now, the functions
\[
c_{\text{sup}}(t)\defeq\limsup_{i\to\infty} c[f_i,g_i](t)
\]
and
\[
c_{\text{inf}}(t)\defeq\liminf_{i\to\infty} c[f_i,g_i](t).
\]
Because each
$c[f_i,g_i]$ is monotone increasing, so are $c_{\text{sup}}$ and
$c_{\text{inf}}$. Monotone functions can only have countably many discontinuity
points (for a total of measure zero). For any $t_0$ that is not
a discontinuity point of either
function, we have from \eqref{Eq:liminf} and \eqref{Eq:limsup}
that $\lim_{i\to\infty} c[f_i,g_i](t_0)$ exists and equals $c[f,g](t_0)$,
so the conditions of convergence in measure hold.
\end{proof}

\begin{lemma}[Lemma~3.4 of the main text]\label{L:r}
If $L$ is a
problem continuous con\-di\-tion\-al-distribution-based loss function that
satisfies IRO, and
$p$ and $q$ are piecewise-continuous probability density functions
that are data distributions
in an estimation problem $(\boldsymbol{x}, \boldsymbol{\theta})$, then
$L_{(\boldsymbol{x},\boldsymbol{\theta})}(p,q)$ depends only on $c[p,q]$ and on
the type of $(\boldsymbol{x}, \boldsymbol{\theta})$.
\end{lemma}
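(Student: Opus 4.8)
The plan is to exploit the invariance granted by IRO to transport any estimation problem $(\boldsymbol{x},\boldsymbol{\theta})$ into a canonical form in which the pair of data distributions $(p,q)$ is replaced by a pair that depends only on the function $c[p,q]$ (and the dimensions $M$, $N$). Since $L$ is conditional-distribution-based, $L_{(\boldsymbol{x},\boldsymbol{\theta})}(p,q)$ already depends only on $p$, $q$, and $M$; what remains is to show it cannot distinguish between two pairs of data distributions that share the same $c$-function. The idea is that $c[p,q]$ records exactly the distribution of the likelihood ratio $r_{p,q}(\boldsymbol{x})$ under $q$ — and, via the relation $\mathrm{d}P/\mathrm{d}Q$, also its distribution under $p$ — and that this distribution is all that survives an IRO transformation.

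First I would make precise the sense in which $c[p,q]$ is a complete invariant of the pair $(p,q)$ modulo measure-preserving relabelling of the observation space. Concretely, given $(p,q)$ on $X\subseteq\mathbb{R}^N$, the map $x\mapsto r_{p,q}(x)$ pushes $q$ forward to a distribution on $[0,\infty)$ whose quantile function is essentially $c[p,q]$; and $p$ is pushed forward to the distribution with density $r$ against that same measure. So the pair $(p,q)$ is determined, up to such relabelling, by $c[p,q]$ together with whatever structure IRO is allowed to scramble. The second step is to realise this relabelling as a bona fide invertible, piecewise-diffeomorphic map $G:\mathbb{R}^N\to\mathbb{R}^N$ of the kind appearing in the IRO axiom: given two pairs $(p,q)$ and $(p',q')$ with $c[p,q]=c[p',q']$, one constructs $G$ sending the $q$-distribution to the $q'$-distribution while simultaneously matching $p$ to $p'$ — the standard one-dimensional construction (compose each with its own CDF to land on a common uniform, then invert) extended to $\mathbb{R}^N$ by a measure-isomorphism argument, with piecewise smoothness obtained from the piecewise-continuity hypothesis on the densities. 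Applying IRO then gives $L_{(\boldsymbol{x},\boldsymbol{\theta})}(p,q)=L_{(G(\boldsymbol{x}),\boldsymbol{\theta})}(p',q')=L_{(\boldsymbol{x}',\boldsymbol{\theta})}(p',q')$, i.e. equal losses.

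The main obstacle I expect is the construction in the second step: producing an honest invertible piecewise-diffeomorphism $G$ of $\mathbb{R}^N$ (not merely an abstract measure isomorphism) that matches $p$ to $p'$ and $q$ to $q'$ simultaneously, while keeping the transformed problem elementary (so that IRO actually applies). The simultaneous matching is the delicate part — a single map must respect two density ratios at once — but it is exactly here that the hypothesis $c[p,q]=c[p',q']$ is used: it guarantees the joint law of $(r_{p,q},\text{"position along the ratio level set"})$ agrees for the two pairs, so a fibrewise construction (first match the ratio coordinate via its quantile function, then, on each level set, match the conditional distributions) goes through. This is presumably where problem-continuity enters as well, to handle pairs $(p,q)$ whose $c$-function is a limit of nicer ones, reducing the general case to a dense, well-behaved subclass via Lemmas~\ref{L:ccont} and the $\mathcal{M}$-continuity of $L$. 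Finally I would note that the dependence on the \emph{type} of $(\boldsymbol{x},\boldsymbol{\theta})$, rather than on $M$ alone, is forced because IRO fixes $N$ and the class (continuous vs.\ semi-continuous), so those stay as admissible parameters in the conclusion.
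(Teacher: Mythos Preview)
Your overall strategy---reduce to a canonical form in which only $c[p,q]$ survives---is exactly the paper's, and you correctly isolate the hard step as building the map $G$. But the proposed resolution has a real gap: a single invertible piecewise-diffeomorphism $G$ doing the job generally \emph{does not exist}. Already in the $N=1$ case with $q$ uniform on $[0,1]$, passing from $p$ to its increasing rearrangement (which is $c[p,q]$) requires, for a non-monotone $p$, collapsing distinct level-set components onto one another; for instance if $p$ is symmetric about $1/2$ the ``sort'' is two-to-one, not a diffeomorphism. Your fibrewise recipe (match the ratio coordinate, then match along level sets) runs into exactly this: the ratio-coordinate map $x\mapsto$ (quantile position of $r_{p,q}(x)$) is a measure isomorphism, not a piecewise-smooth bijection, once the level sets of $r_{p,q}$ are disconnected.

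The paper sidesteps this by never building a single $G$. After the cdf transform sending $q$ to uniform, it \emph{approximates} the sort: partition $[0,1]$ into intervals of length $\delta$ and permute them into increasing order of $r_{p,q}$. Each such permutation \emph{is} a legitimate piecewise-diffeomorphism, so IRO applies at every $\delta$; as $\delta\to 0$ the transformed $f_{\theta_1}$ converges in measure to $c[p,q]$, and the losses converge by Lemma~\ref{L:ccont} together with problem-continuity. So problem-continuity is not, as you suggest, a device for approximating bad $c$-functions by nice ones---it is the essential tool that lets one replace the nonexistent exact sorting diffeomorphism by a limiting sequence of honest IRO maps, and this is needed even for the smoothest $p$ and $q$.
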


\begin{proof}
Fix the problem type.

Let us first note that the following conditions are equivalent.
\begin{enumerate}
\item $c[p,q]$ equals the indicator function on $(0,1]$ in all but a measure
zero of values,
\item $p$ equals $q$ in all but a measure zero of $X$, the joint support of
$p$ and $q$,
\item $p$ and $q$ are $\mathcal{M}$-equivalent, in the sense that a sequence
of elements all equal to $p$ nevertheless satisfies the condition of
$\mathcal{M}$-convergence to $q$, and
\item $L(p,q)=0$.
\end{enumerate}

The equivalence of the first and second conditions stems from the definition
of $c[p,q]$, the equivalence of the second and third conditions stems from the
definition of $\mathcal{M}$-convergence, and the equivalence of the last
two conditions follows from problem continuity, together with
(1.1).

Hence, if $c[p,q]$ equals the indicator function on $(0,1]$, this uniquely
determines the value of $L(p,q)$ to be $0$, in accordance with the lemma.

It remains to be proved that such a functional relationship from $c[p,q]$ to
$L(p,q)$ holds for all other $c[p,q]$ as well, but for this, in accordance with
the second condition stated, we can safely assume that $p$ and $q$ differ in a
positive measure of $X$. Because both $p$ and $q$ integrate to $1$, we can
consequently also assume that the two are not linearly dependent.
The remainder of this proof follows under this assumption.

Note first that because $L$ is known to be conditional-distribution-based, the value of
$L(p,q)$ is not dependent on the full details of the
estimation problem: it will be the same in any estimation problem of the
same type that contains the conditional data distributions $p$ and $q$.
Let us therefore design an estimation problem that is easy to analyse but
contains these two conditional data distributions.

Let $(\boldsymbol{x},\boldsymbol{\theta})$ be an estimation problem with
$\Theta=[0,1]^M$ and a uniform prior on $\boldsymbol{\theta}$. Its conditional data distribution
at $\theta_0=(0,\ldots,0)$ will be $p$, at $\theta_1=(1,0,\ldots,0)$
will be $q$,
and we will choose piecewise continuous conditional data distributions, $f_\theta$, over the rest
of the $\theta\in\{0,1\}^M$ so all $2^M$ are linearly independent, share the
same support, and differ
from each other over a positive measure of $X$, and so that their
respective $r_{f_\theta,q}$ values are all monotone weakly increasing
with $r_{p,q}$ and with each other.

If $M>1$, we further choose $f_{\theta_2}$ at $\theta_2=(0,1,0,\ldots,0)$ to
satisfy that $c[f_{\theta_2},q]$ is
monotone strictly increasing.
If $M=1$, this is not necessary and we, instead, choose $\theta_2=\theta_0$. 

We then extend this description
of $f^{(\boldsymbol{x},\boldsymbol{\theta})}_\theta$ at $\{0,1\}^M$ into a full
characterisation of all the problem's conditional data distributions by setting these to be
multilinear functions of the coordinates of $\theta$.

We now create a sequence of estimation problems,
$(\boldsymbol{x}_i,\boldsymbol{\theta})$ to satisfy the conditions of
$L$'s problem-continuity assumption. We do this by constructing a sequence
$\left(S_i\right)_{i\in\mathbb{N}}$ of subsets of $\mathbb{R}^N$ such that
for all $\theta\in\{0,1\}^M$,
$\mathbf{P}(\boldsymbol{x}\in S_i|\boldsymbol{\theta}=\theta)$ tends to $1$,
and for every $x\in S_i$,
\begin{equation}\label{Eq:epsilon_i}
\left|f^{(\boldsymbol{x},\boldsymbol{\theta})}_\theta(x)-f^{(\boldsymbol{x}_i,\boldsymbol{\theta})}_\theta(x)\right|<\epsilon_i,
\end{equation}
for an arbitrarily-chosen sequence $\left(\epsilon_i\right)_{i\in\mathbb{N}}$
tending to zero.
By setting the remaining conditional data distribution values as multilinear functions of the
coordinates of $\theta$, as above, the sequence
$(\boldsymbol{x}_i,\boldsymbol{\theta})$ will satisfy the problem-continuity
condition and will guarantee
\[
\lim_{i\to\infty} L_{(\boldsymbol{x}_i,\boldsymbol{\theta})}(\theta_0,\theta_1)=L_{(\boldsymbol{x},\boldsymbol{\theta})}(\theta_0,\theta_1)=L(p,q).
\]

We now construct the sequence $\left(S_i\right)_{i\in\mathbb{N}}$ explicitly.

Each $S_i$ will be describable by the positive parameters $(a,b,d,r)$ as
follows.
Let $C^N_d=\{x\in\mathbb{R}^N:|x|_{\infty}\le d/2\}$, i.e.\ the axis-parallel,
origin-centred, $N$-dimensional cube of side length $d$.
$S_i$ will be chosen to contain all $x\in C^N_d$
such that for all $\theta\in\{0,1\}^M$,
$a\le f^{(\boldsymbol{x},\boldsymbol{\theta})}_\theta(x)\le b$ and $x$ is at
least a distance of $r$ away from the nearest discontinuity point of
$f^{(\boldsymbol{x},\boldsymbol{\theta})}_\theta$, as well as from the origin.
By choosing small enough
$a$ and $r$ and large enough $b$ and $d$, it is always possible to make
$\mathbf{P}(\boldsymbol{x}\in S_i|\boldsymbol{\theta}=\theta)$ arbitrarily
close to $1$, so the sequence can be made to satisfy its requirements.

We will choose $d$ to be a natural.

We now describe how to construct each
$f^{(\boldsymbol{x}_i,\boldsymbol{\theta})}_\theta$ from its respective
$f^{(\boldsymbol{x},\boldsymbol{\theta})}_\theta$.
We first describe for each $\theta\in\{0,1\}^M$
a new function $g^i_\theta:\mathbb{R}^N\to\mathbb{R}^{\ge 0}$
as follows. Begin by setting
$g^i_\theta(x)=f^{(\boldsymbol{x},\boldsymbol{\theta})}_\theta(x)$
for all $x\in S_i$.
If $x\notin C^N_d$, set $g^i_\theta(x)$
to zero. Otherwise, complete the $g^i_\theta$ functions so that all are
linearly independent and so that each
is positive and continuous inside $C^N_d$, and integrates to $1$.
Note that because a neighbourhood around the origin is known to not be in
$S_i$, it is never the case that $S_i=C^N_d$.
This allows enough degrees of
freedom in completing the functions $g$ in order to meet all
their requirements.

As all $g^i_\theta$ are continuous
functions over the compact domain $C^N_d$,
by the Heine-Cantor Theorem~\citep{rudin1964principles} they are
uniformly continuous. There must therefore exist a natural $n$, such that we can
tile $C^N_d$ into sub-cubes of side length $1/n$ such that by setting each
$f^{(\boldsymbol{x}_i,\boldsymbol{\theta})}_\theta$ value in each sub-cube to a
constant for the sub-cube equal to the mean over the entire sub-cube tile
of $g^i_\theta$, the result will satisfy for all $x\in C^N_d$ and all
$\theta\in\{0,1\}^M$,
$\left|f^{(\boldsymbol{x}_i,\boldsymbol{\theta})}_\theta(x)-g^i_\theta(x)\right|<\epsilon_i$.
Because $f^{(\boldsymbol{x}_i,\boldsymbol{\theta})}$ is by design multi-linear
in $\theta$, this implies that for all
$\theta\in [0,1]^M$ and all $x\in S_i$, condition
\eqref{Eq:epsilon_i} is attained.
Furthermore, by choosing a large enough $n$, we can always ensure, because
the $g$ functions are continuous and linearly independent, that also the
$f^{(\boldsymbol{x}_i,\boldsymbol{\theta})}_\theta$ functions, for
$\theta\in \{0,1\}^M$ are linearly independent and differ in more than a
measure zero of $\mathbb{R}^N$. Together, these properties ensure that the
new problems constructed are well-defined and elementary.

We have therefore constructed $(\boldsymbol{x}_i,\boldsymbol{\theta})$ as
a sequence of elementary estimation problems that $\mathcal{M}$-approximate
$(\boldsymbol{x},\boldsymbol{\theta})$ arbitrarily well, while being entirely
composed of $f^{(\boldsymbol{x}_i,\boldsymbol{\theta})}_\theta$ functions whose
support is $C^N_{d_i}$ for some natural
$d_i$ and whose values within their support are piecewise-constant inside cubic
tiles of side-length $1/n_i$, for some natural $n_i$.

We now use IRO to reshape the observation space of the estimation problems
in the constructed sequence by a piecewise-continuous transform.

Namely, we take each constant-valued cube of side length $1/n_i$ and
transform it using a scaling transformation in each coordinate,
as follows. Consider a single cubic tile, and let
the value of $f^{(\boldsymbol{x}_i,\boldsymbol{\theta})}_{\theta_1}(x)$
at points $x$ that are within it be $G_i$.
We scale the first coordinate of the tile to be of length
$G_i/n_i^N$, and all other coordinates to be of length $1$.
Notably, this transformation increases the volume of the cube by a factor
of $G_i$, so the probability density inside the cube, for each $f_\theta$,
will drop by a corresponding factor of $G_i$.

We now place the transformed cubes by stacking them along the first
coordinate, sorted by increasing value of
$f^{(\boldsymbol{x}_i,\boldsymbol{\theta})}_{\theta_2}(x)/f^{(\boldsymbol{x}_i,\boldsymbol{\theta})}_{\theta_1}(x)$.

Notably, because the probability density $f_{\theta_1}$
in all transformed cubes is $G_i/G_i=1$, it is
possible to arrange all transformed cubes in this way so that, together, they
fill exactly the unit cube in $\mathbb{R}^N$.
Let the new estimation problems created in this way be
$(\boldsymbol{x}'_i,\boldsymbol{\theta})$,
let $t_i:\mathbb{R}^N\to\mathbb{R}^N$ be the transformation,
$t_i(\boldsymbol{x}_i)=\boldsymbol{x}'_i$, applied on
the observation space and let $t^1_i(x)$ be the first coordinate value of
$t_i(x)$.

By IRO, $L(f^{(\boldsymbol{x}'_i,\boldsymbol{\theta})}_{\theta_0},f^{(\boldsymbol{x}'_i,\boldsymbol{\theta})}_{\theta_1})=L(f^{(\boldsymbol{x}_i,\boldsymbol{\theta})}_{\theta_0},f^{(\boldsymbol{x}_i,\boldsymbol{\theta})}_{\theta_1})$,
which we know tends to $L(p,q)$.

Consider the probability density of each
$f^{(\boldsymbol{x}'_i,\boldsymbol{\theta})}_\theta$ over its support
$[0,1]^N$. This is a probability density that is uniform along all axes except
the first, but has some marginal, $s=s^i_\theta$, along the first axis.
We denote such a distribution by $D_N(s)$. Specifically, for $\theta=\theta_2$,
because of our choice of sorting order, we have
$s^i_{\theta_2}=c[f^{(\boldsymbol{x}_i,\boldsymbol{\theta})}_{\theta_2},f^{(\boldsymbol{x}_i,\boldsymbol{\theta})}_{\theta_1}]$,
so by Lemma~\ref{L:ccont}, this is known to $\mathcal{M}$-converge to
$c[f^{(\boldsymbol{x},\boldsymbol{\theta})}_{\theta_2},f^{(\boldsymbol{x},\boldsymbol{\theta})}_{\theta_1}]$.

If $M=1$, the above is enough to show that the $\mathcal{M}$-limit problem
of $(\boldsymbol{x}'_i,\boldsymbol{\theta})$ exists. If $M>1$, consider
the following.

Let $t:X\to [0,1]$ be the transformation mapping each $x\in X$ to
the supremum of all $\tilde{t}$ for which
$c[f^{(\boldsymbol{x},\boldsymbol{\theta})}_{\theta_2},f^{(\boldsymbol{x},\boldsymbol{\theta})}_{\theta_1}](\tilde{t})\le f^{(\boldsymbol{x},\boldsymbol{\theta})}_{\theta_2}(x)/f^{(\boldsymbol{x},\boldsymbol{\theta})}_{\theta_1}(x)$.
By design,
$c[f^{(\boldsymbol{x},\boldsymbol{\theta})}_{\theta_2},f^{(\boldsymbol{x},\boldsymbol{\theta})}_{\theta_1}]$
is left continuous, so
$c[f^{(\boldsymbol{x},\boldsymbol{\theta})}_{\theta_2},f^{(\boldsymbol{x},\boldsymbol{\theta})}_{\theta_1}](t(x))= f^{(\boldsymbol{x},\boldsymbol{\theta})}_{\theta_2}(x)/f^{(\boldsymbol{x},\boldsymbol{\theta})}_{\theta_1}(x)$.

Summing up the entire construction,
in all but a diminishing measure of $x$ we have that
$f^{(\boldsymbol{x}_i,\boldsymbol{\theta})}_{\theta_2}(x)/f^{(\boldsymbol{x}_i,\boldsymbol{\theta})}_{\theta_1}(x)$ approaches $f^{(\boldsymbol{x},\boldsymbol{\theta})}_{\theta_2}(x)/f^{(\boldsymbol{x},\boldsymbol{\theta})}_{\theta_1}(x)$, which in turn equals the value $c[f^{(\boldsymbol{x},\boldsymbol{\theta})}_{\theta_2},f^{(\boldsymbol{x},\boldsymbol{\theta})}_{\theta_1}](t(x))$.
On the other hand, we have that
$s^i_{\theta_2}=c[f^{(\boldsymbol{x}_i,\boldsymbol{\theta})}_{\theta_2},f^{(\boldsymbol{x}_i,\boldsymbol{\theta})}_{\theta_1}]$ also $\mathcal{M}$-approaches
$c[f^{(\boldsymbol{x},\boldsymbol{\theta})}_{\theta_2},f^{(\boldsymbol{x},\boldsymbol{\theta})}_{\theta_1}]$
by Lemma~\ref{L:ccont}, and satisfies
$f^{(\boldsymbol{x}_i,\boldsymbol{\theta})}_{\theta_2}(x)/f^{(\boldsymbol{x}_i,\boldsymbol{\theta})}_{\theta_1}(x)=s^i_{\theta_2}(t^1_i(x))$.

Together, this indicates $\left(c[f^{(\boldsymbol{x},\boldsymbol{\theta})}_{\theta_2},f^{(\boldsymbol{x},\boldsymbol{\theta})}_{\theta_1}](t^1_i(x))\right) \xrightarrow{\mathcal{M}} c[f^{(\boldsymbol{x},\boldsymbol{\theta})}_{\theta_2},f^{(\boldsymbol{x},\boldsymbol{\theta})}_{\theta_1}](t(x))$.

Because
$c[f^{(\boldsymbol{x},\boldsymbol{\theta})}_{\theta_2},f^{(\boldsymbol{x},\boldsymbol{\theta})}_{\theta_1}]$
is monotone strictly increasing,
it follows that $t^1_i(x)$ $\mathcal{M}$-converges to $t(x)$.
For all other $\theta\in [0,1]^M$ this then implies that $s^i_\theta$
$\mathcal{M}$-converges to $c[f^{(\boldsymbol{x},\boldsymbol{\theta})}_{\theta},f^{(\boldsymbol{x},\boldsymbol{\theta})}_{\theta_1}]$,
because by construction all the problem's $r_{f_{\theta},q}$ are monotone
increasing with each other.

For all $\theta$,
the $f^{(\boldsymbol{x}'_i,\boldsymbol{\theta})}_{\theta}$ sequence therefore
has a limit, that limit being the distribution
$D_N(c[f^{(\boldsymbol{x},\boldsymbol{\theta})}_{\theta},f^{(\boldsymbol{x},\boldsymbol{\theta})}_{\theta_1}])$.

In particular, the limit at $\theta=\theta_0$ is $D_N(c[p,q])$ and the limit
at $\theta=\theta_1$ is $U([0,1]^N)$, the uniform distribution over the
unit cube.

By problem-continuity of $L$,
$L(D_N(c[p,q]),U([0,1]^N))=L(p,q)$.
Hence, $L(p,q)$ is a function of only $c[p,q]$.
\end{proof}

\begin{lemma}[Lemma~3.5 of the main text]\label{L:Fisher}
Let $L$ be a smooth conditional-distribution-based
loss function satisfying that $L(P,Q)$ is a function only of $c[P,Q]$
and of the problem type, and let $(\boldsymbol{x},\boldsymbol{\theta})$
be an elementary $\boldsymbol{\theta}$-continuous estimation problem of a type
on which $L$ is sensitive.

If one of the following conditions holds true:
\begin{enumerate}
\item The problem $(\boldsymbol{x},\boldsymbol{\theta})$ is a
continuous estimation problem, or,
\item The problem $(\boldsymbol{x},\boldsymbol{\theta})$ is a semi-continuous
estimation problem and $L$ satisfies ISI,
\end{enumerate}
then there exists a nonzero constant $\gamma$, dependent only on
the choice of $L$ and the problem type,
such that for every $\theta\in\Theta$
the Hessian matrix $H_L^{\theta}$ equals
$\gamma$ times the Fisher information matrix $\mathcal{I}_{\theta}$.
\end{lemma}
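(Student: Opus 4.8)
The plan is to expand $L(P_\theta, P_{\theta_0})$ to second order in $\theta$ around $\theta_0$ and show the quadratic form that appears is a fixed multiple of the Fisher information. By hypothesis $L(P_{\theta},P_{\theta_0})$ depends on the pair only through $c[P_\theta,P_{\theta_0}]$, and $c$ is the quantile function of the likelihood ratio $r_{\theta,\theta_0}(\boldsymbol{x})$ under $P_{\theta_0}$. So the first step is to understand how $c[P_\theta, P_{\theta_0}]$, as an element of (say) $L^2([0,1])$ or in the sense of convergence in measure, varies as $\theta\to\theta_0$. Writing $r_{\theta,\theta_0}(x) = f_\theta(x)/f_{\theta_0}(x)$ and using smoothness of $f(\theta|x)$ in $\theta$ (guaranteed by elementariness), one has $r_{\theta,\theta_0}(x) = 1 + (\theta-\theta_0)^T s_{\theta_0}(x) + O(|\theta-\theta_0|^2)$ where $s_{\theta_0}(x) = \nabla_\theta \log f(x|\theta)|_{\theta_0}$ is the score. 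The key point is that the distribution of $r_{\theta,\theta_0}(\boldsymbol{x})$ under $P_{\theta_0}$, hence its quantile function $c$, is, to leading order, a deterministic rescaling/shift governed by the law of the one-dimensional projection $(\theta-\theta_0)^T s_{\theta_0}(\boldsymbol{x})$, whose variance is exactly $(\theta-\theta_0)^T \mathcal{I}_{\theta_0}(\theta-\theta_0)$ since $\mathbf{E}[s_{\theta_0}|\theta_0]=0$.

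Next I would feed this into the loss. Since $L$ is smooth in $\theta_1$ (as a loss function) and depends on $\theta$ only through $c$, and since $c[P_{\theta_0},P_{\theta_0}]$ is the constant function $1$ (the point where $L=0$ by \eqref{Eq:distinct}), a second-order Taylor expansion of $L$ gives $L(P_\theta,P_{\theta_0}) = \tfrac12 (\theta-\theta_0)^T H_L^{\theta_0}(\theta-\theta_0) + o(|\theta-\theta_0|^2)$, with no linear term because $L\ge 0$ attains its minimum there. On the other hand, the dependence-only-on-$c$ structure forces the leading quadratic behaviour of $L$ to be a universal functional $\Psi$ applied to the ``first-order shape'' of $c$, which I argued above is determined by the single scalar $v := (\theta-\theta_0)^T \mathcal{I}_{\theta_0}(\theta-\theta_0)$ together with the fixed problem type. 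Concretely one should get $L(P_\theta,P_{\theta_0}) = \gamma_0\, v + o(v)$ for some constant $\gamma_0$ depending only on $L$ and the type (this uses that the first-order perturbation of $c$ in a direction $u$ is, after the canonical-form reduction of Lemma~\ref{L:r}, a rescaling by $\sqrt{u^T\mathcal{I}_{\theta_0}u}$ of a fixed profile, namely the quantile function of a standardized version of the score's projection). Comparing the two expansions directional-derivative by directional-derivative yields $H_L^{\theta_0} = 2\gamma_0\, \mathcal{I}_{\theta_0}$, i.e. $\gamma = 2\gamma_0$. Sensitivity of $L$ on the type guarantees $\gamma\ne 0$: if $\gamma$ were zero the Hessian would vanish at every $\theta_0$ in every problem of that type, contradicting the existence of some elementary problem and some $\theta_0,i,j$ with $\partial^2 L/\partial\theta(i)\partial\theta(j)\ne 0$ (here one also needs that the Fisher information is nondegenerate on a sensitive problem, or else one restricts to a subproblem where it is — an elementary problem with parametrisation chosen so $\mathcal{I}_{\theta_0}$ is, say, the identity, which IRP allows).

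The semi-continuous case is where the extra hypothesis ISI earns its keep. When $\boldsymbol{x}$ is discrete, the likelihood ratio $r_{\theta,\theta_0}$ takes values in a countable set and its quantile function $c$ is a step function, so the clean ``rescale a fixed profile'' picture used above can fail — the first-order shape of $c$ can see the atoms and is not a pure rescaling. The plan is to sidestep this by adjoining to $\boldsymbol{x}$ an independent continuous noise variable $\boldsymbol{y}$ (independent of $\boldsymbol{\theta}$ given $\boldsymbol{x}$, e.g. uniform on $[0,1]$ independent of everything), forming the elementary continuous problem $((\boldsymbol{x},\boldsymbol{y}),\boldsymbol{\theta})$. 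ISI says $L$ is unchanged by this, and the Fisher information is also unchanged since $\boldsymbol{y}$ is ancillary; but now the likelihood ratio has a continuous distribution (the noise smears the atoms), so the continuous-case argument applies to $((\boldsymbol{x},\boldsymbol{y}),\boldsymbol{\theta})$ and transfers back. I expect the main obstacle to be the first paragraph's claim in full rigour: controlling the convergence $c[P_\theta,P_{\theta_0}]\to c[P_{\theta_0},P_{\theta_0}]$ and its first-order term uniformly enough (in the $\mathcal{M}$-topology, via Lemma~\ref{L:ccont}) to justify differentiating $L$ through the $c$-representation, and in particular ruling out a nonzero linear term and identifying the quadratic term with the variance of the score projection rather than with some other second moment of $c$. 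This is essentially a delicate interchange-of-limits / local-asymptotic-normality-style estimate, and it is the technical heart of the lemma.
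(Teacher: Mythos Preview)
Your overall strategy (Taylor-expand $L$, identify the quadratic form, use sensitivity for $\gamma\neq 0$) matches the paper's, but there is a genuine gap at the step you flag as ``the technical heart.'' You write that the first-order perturbation of $c$ in direction $u$ is a rescaling by $\sqrt{u^T\mathcal{I}_{\theta_0}u}$ of a \emph{fixed} profile, ``the quantile function of a standardized version of the score's projection.'' That profile is not fixed: the standardized law of $u^T s_{\theta_0}(\boldsymbol{x})/\sqrt{u^T\mathcal{I}_{\theta_0}u}$ changes with the problem and with the direction $u$. Your expansion therefore gives $u^T H_L^{\theta_0}u = (u^T\mathcal{I}_{\theta_0}u)\cdot L''(1)[q_{\mathrm{std}},q_{\mathrm{std}}]$, and nothing in your argument forces the last factor to be independent of the shape of $q_{\mathrm{std}}$. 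That is precisely what must be proved, and it does not follow from smoothness, $\mathcal{M}$-continuity, or LAN-type control alone.

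The paper closes this gap not by analysing $c$'s perturbation but by computing $\partial^2 L/\partial\theta_1(i)\partial\theta_1(j)$ directly via a functional chain rule through $r_{f_{\theta_1},f_{\theta_2}}$. The dependence-only-on-$c$ hypothesis constrains the result to a three-term form \eqref{Eq:second_deriv} with coefficient functions $\Delta_{\text{x}},\Delta_{\text{xy}},\Delta_{\text{xx}}$ depending only on the law of $r$. At $\theta_1=\theta_2$ one has $r\equiv 1$, so these coefficients collapse to constants; then the normalisation $\int r_{f_{\theta_1},f_\theta}f_\theta=1$ annihilates the $\Delta_{\text{x}}$ and $\Delta_{\text{xy}}$ terms (their integrands are exact $\theta_1$-derivatives of this constant), leaving $\Delta_{\text{xx}}(1)\cdot\mathcal{I}_\theta(i,j)$. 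In your language, this is exactly the statement that $L''(1)[h,h]$ has the form $\alpha(\int h)^2+\beta\int h^2$, so that on mean-zero perturbations it sees only $\|h\|_2^2$ --- the missing structural fact. (Your ISI reduction for the semi-continuous case is reasonable in spirit, but note it changes the problem \emph{type}; the paper instead runs the same chain-rule computation with sums replacing integrals, using ISI earlier to secure the dependence-on-$c$ hypothesis.)
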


\begin{proof}
Fix the problem type.

We wish to calculate the derivatives of $L(P_{\theta_1},P_{\theta_2})$
according to $\theta_1$. For convenience, let us define a new function,
$\tilde{L}$, which describes $L$ in a one-parameter form, by
\[
\tilde{L}(r_{P,Q})=L(P,Q).
\]
This is possible by our assumption that $L(P,Q)$ is only a function of $c[P,Q]$.

We differentiate $\tilde{L}(r_{P,Q})$ as we would any composition of functions.
The derivatives of $r_{P_{\theta_1},P_{\theta_2}}$ in $\theta_1$ are
straightforward to compute, so we
concentrate on the question of how minute perturbations of $r$ affect
$\tilde{L}(r)$.

For this, we first extend the domain of $\tilde{L}(r)$. Natively, $\tilde{L}(r)$ is
only defined when $\mathbf{E}_{\boldsymbol{x}\sim Q}[r(\boldsymbol{x})]=1$.
However, to be able to perturb $r$ more freely, we define, for finite
expectation $r(\boldsymbol{x})$,
$\tilde{L}(r)=\tilde{L}(r/\mathbf{E}_{\boldsymbol{x}\sim Q}[r(\boldsymbol{x})])$.

Let $Y\subseteq X$ be a set with
$\mathbf{P}_{\boldsymbol{x}\sim Q}(\boldsymbol{x}\in Y)=\epsilon>0$ such that
for all $x\in Y$, $r(x)$ is a constant, $r_0$. The derivative of $\tilde{L}(r)$ in
$Y$ is defined as
\[
\left[\nabla_Y(\tilde{L})\right](r)=\lim_{\Delta\to 0} \frac{\tilde{L}(r+\Delta \cdot\chi_Y)-\tilde{L}(r)}{\Delta},
\]
where for any $S\subseteq\mathbb{R}^s$, we denote by $\chi_S$ the function
over $\mathbb{R}^s$ that yields $1$ when the input is in $S$ and $0$ otherwise.

By our smoothness assumption on $L$, this derivative is known to
exist,
because it is straightforward to construct an elementary, continuous
estimation problem for which these would be (up to normalisation) the
$r_{P_{\theta_0},P_{\theta_0+\Delta e_1}}$ values for
some $\theta_0$ and basis vector $e_1$.

Consider, now, what this derivative's value can be. By assumption that $L(P,Q)$
only depends on $c[P,Q]$,
we know that the derivative's value can depend on $\epsilon$, $r_0$ and
on the distribution of $r(\boldsymbol{x})$ for $\boldsymbol{x}\sim Q$, a
distribution we will name $D_Q(r)$, but it cannot depend
on any other properties of $Y$ or $r$, because any changes to $Y$ and $r$ that
do not affect $\epsilon$, $r_0$ and $D_Q(r)$
will yield the same $c[P,Q]$ values throughout the calculation of
$\left[\nabla_Y(\tilde{L})\right](r)$. Hence, we can describe the derivative as
$\Delta_{\text{x}}(\epsilon,r_0|D_Q(r))$.

As a next step, consider what would happen if we were to partition
$Y$ into $2$ sets, each of measure $\epsilon/2$ in $Q$. (Such a partition is
straightforward to produce in the continuous case. If $X$ is discrete, one can
create it by utilising ISI: translate $\boldsymbol{x}$ to
$(\boldsymbol{x},\boldsymbol{y})$, where $\boldsymbol{y}$ is a Bernoulli
random variable with $p=1/2$.)

The marginal impact of each set on the value of $L$ is
$\Delta_{\text{x}}(\epsilon/2,r_0|D_Q(r))$, but their total
impact is $\Delta_{\text{x}}(\epsilon,r_0|D_Q(r))$.
More generally, either using the problem's continuity or
using ISI and $L$'s smoothness, we can describe
$\Delta_{\text{x}}(\epsilon,r_0|D_Q(r))$ as
$\epsilon \cdot \Delta_{\text{x}}(r_0|D_Q(r))$.

Utilising $L(\theta_1,\theta_2)$'s representation as
$\tilde{L}(r)$, where $Q=P_{\theta_2}$ and
$r=r_{P_{\theta_1},P_{\theta_2}}$, and
noting that $\epsilon$ was, in the calculation above, the measure of $Y$ in
$Q=P_{\theta_2}$,
we can now write the first derivative of $L$ in some direction $i$ of
$\theta_1$ explicitly as an integral in this measure, i.e.\ in
``$\text{d}P_{\theta_2}$''.

For clarity of presentation, we will write this as an integral in
``$f_{\theta_2}(x)\text{d}x$'', using here and throughout the remainder of
the proof pdf notation, as would be appropriate when $\boldsymbol{x}$ is
known to be continuous. This change is meant merely to simplify the notation,
and in no way restricts the proof. Readers are welcome to verify that all
steps are equally valid for any $P_{\theta_1}$ and $P_{\theta_2}$
distributions.

Let $R_{\theta_1,\theta_2}$ be
$D_{f_{\theta_2}}(r_{f_{\theta_1},f_{\theta_2}})$.
If $c[f_{\theta_1},f_{\theta_2}]$ is a piecewise-constant
function, the derivative of $L$
can be written as follows.
\[
\frac{\partial L(\theta_1,\theta_2)}{\partial \theta_1(i)}=
\int_{X} \Delta_{\text{x}}(r_{f_{\theta_1},f_{\theta_2}}(x)|R_{{\theta_1},{\theta_2}}) \frac{\partial r_{f_{\theta_1},f_{\theta_2}}(x)}{\partial \theta_1(i)}f_{\theta_2}(x)\text{d}x.
\]

The same reasoning can be used to describe the second derivative of $L$
(this time in the directions $i$ and $j$ of $\theta_1$). The second derivative
of $\tilde{L}(r)$ when perturbing $r$ relative to two subsets $Y_1$ and $Y_2$ is
defined as
\[
\lim_{\Delta\to 0} \frac{\left[\nabla_{Y_1}(\tilde{L})\right](r+\Delta\cdot\chi_{Y_2})-\left[\nabla_{Y_1}(\tilde{L})\right](r)}{\Delta},
\]
and once again using the assumption that $L(P,Q)$ only
depends on $c[P,Q]$, we can see that if $Y_1$ and $Y_2$
are disjoint, if the measures of $Y_1$ and $Y_2$ in $Q$ are, respectively,
$\epsilon_1$ and $\epsilon_2$, both positive,
and if the value of $r_{P,Q}(x)$ for $x$ values in each subset
is a constant, respectively $r_1$ and $r_2$, then any such $Y_1$ and $Y_2$
will perturb $\tilde{L}(r)$ in exactly the same amount over any $r$ with the
same $D_Q(r)$.
We name the second derivative coefficient
in this case $\Delta_{\text{xy}}(r_1,r_2|D_Q(r))$.

The caveat that $Y_1$ and $Y_2$ must be disjoint is important, because if
$Y=Y_1=Y_2$ the symmetry no longer holds. This is a second case, and for it
we must define a different coefficient $\Delta_{\text{xx}}(r_0|D_Q(r))$, where
$r_0=r_1=r_2$.

In the case where $c[f_{\theta_1},f_{\theta_2}]$ is
a piecewise-constant function, the second derivative
can therefore be written as
\begin{equation}\label{Eq:second_deriv}
\begin{split}
\frac{\partial^2  L(\theta_1,\theta_2)}{\partial \theta_1(i)\partial \theta_1(j)}=
&\int_{X} \Delta_{\text{x}}(r_{f_{\theta_1},f_{\theta_2}}(x)|R_{{\theta_1},{\theta_2}}) \frac{\partial^2 r_{f_{\theta_1},f_{\theta_2}}(x)}{\partial \theta_1(i)\partial \theta_1(j)}f_{\theta_2}(x)\text{d}x \\
&\quad+\int_{X}\int_{X} \Delta_{\text{xy}}(r_{f_{\theta_1},f_{\theta_2}}(x_1),r_{f_{\theta_1},f_{\theta_2}}(x_2)|R_{{\theta_1},{\theta_2}}) \\
&\quad\quad\quad\quad\quad
\frac{\partial r_{f_{\theta_1},f_{\theta_2}}(x_1)}{\partial \theta_1(i)}\frac{\partial r_{f_{\theta_1},f_{\theta_2}}(x_2)}{\partial \theta_1(j)}
f_{\theta_2}(x_2)\text{d}x_2 f_{\theta_2}(x_1)\text{d}x_1 \\
&\quad+\int_{X} \Delta_{\text{xx}}(r_{f_{\theta_1},f_{\theta_2}}(x)|R_{{\theta_1},{\theta_2}})
\frac{\partial r_{f_{\theta_1},f_{\theta_2}}(x)}{\partial \theta_1(i)}\frac{\partial r_{f_{\theta_1},f_{\theta_2}}(x)}{\partial \theta_1(j)}f_{\theta_2}(x)\text{d}x.
\end{split}
\end{equation}

In order to compute the Hessian matrix $H_L^{\theta}$, we consider each
matrix element $H_L^{\theta}(i,j)$ in turn.

This equals
${\partial^2  L(\theta_1,\theta_2)}/{\partial \theta_1(i)\partial \theta_1(j)}$
where $\theta_1=\theta_2=\theta$. In particular, $c[f_{\theta_1},f_{\theta_2}]$
is $\chi_{(0,1]}$ and $R_{{\theta_1},{\theta_2}}$ is the distribution
$\mathbf{1}$, which yields the constant value $1$.

The value of \eqref{Eq:second_deriv} in this case becomes
\begin{equation}\label{Eq:second_deriv_at_1}
\begin{aligned}
&\Delta_{\text{x}}\left(1\middle|\mathbf{1}\right)\int_{X} \left.\frac{\partial^2 r_{f_{\theta_1},f_{\theta}}(x)}{\partial \theta_1(i)\partial \theta_1(j)}\right\rvert_{\theta_1=\theta}f_{\theta}(x)\text{d}x \\
&\quad +\Delta_{\text{xy}}\left(1,1\middle|\mathbf{1}\right)\left(\int_{X} \left.\frac{\partial r_{f_{\theta_1},f_{\theta}}(x)}{\partial \theta_1(i)}\right\rvert_{\theta_1=\theta}f_{\theta}(x)\text{d} x\right)
\left(\int_{X} \left.\frac{\partial r_{f_{\theta_1},f_{\theta}}(x)}{\partial \theta_1(j)}\right\rvert_{\theta_1=\theta}f_{\theta}(x)\text{d} x\right)\\
&\quad +\Delta_{\text{xx}}\left(1\middle|\mathbf{1}\right)\int_{X} \left(\left.\frac{\partial r_{f_{\theta_1},f_{\theta}}(x)}{\partial \theta_1(i)}\right\rvert_{\theta_1=\theta}\right)
\left(\left.\frac{\partial r_{f_{\theta_1},f_{\theta}}(x)}{\partial \theta_1(j)}\right\rvert_{\theta_1=\theta}\right)f_{\theta}(x)\text{d}x.
\end{aligned}
\end{equation}

Note, however, that because $(\boldsymbol{x},\boldsymbol{\theta})$ is an
estimation problem, i.e.\ all its conditional data distributions are probability measures, not
general measures, it is the case that
\[
\int_{X} r_{f_{\theta_1},f_{\theta}}(x)f_{\theta}(x)\text{d}x
=\int_{X} f_{\theta_1}(x)\text{d}x=1,
\]
and is therefore a constant
independent of either $\theta_1$ or $\theta$. Its various derivatives in
$\theta_1$ are accordingly all zero. This makes the first two summands in
\eqref{Eq:second_deriv_at_1} zero. What is left, when setting
$\gamma=\Delta_{\text{xx}}(1|\mathbf{1})$, is
\begin{align*}
H_L^{\theta}(i,j)
&=\gamma \int_{X} \left(\left.\frac{\partial r_{f_{\theta_1},f_{\theta}}(x)}{\partial \theta_1(i)}\right\rvert_{\theta_1=\theta}\right)
\left(\left.\frac{\partial r_{f_{\theta_1},f_{\theta}}(x)}{\partial \theta_1(j)}\right\rvert_{\theta_1=\theta}\right)f_{\theta}(x)\text{d}x\\
&=\gamma \int_X \left(\left.\frac{\partial f_{\theta_1}(x)/f_{\theta}(x)}{\partial \theta_1(i)}\right\rvert_{\theta_1=\theta}\right)
\left(\left.\frac{\partial f_{\theta_1}(x)/f_{\theta}(x)}{\partial \theta_1(j)}\right\rvert_{\theta_1=\theta}\right) f_{\theta}(x)\text{d}x\\
&=\gamma \int_X \left(\frac{\partial \log f_{\theta}(x)}{\partial \theta(i)}\right)\left(\frac{\partial \log f_{\theta}(x)}{\partial \theta(j)}\right) f_{\theta}(x)\text{d}x\\
&=\gamma \mathbf{E}_{\boldsymbol{x}\sim f_{\theta}}\left[\left(\frac{\partial \log f_{\theta}(\boldsymbol{x})}{\partial \theta(i)}\right)\left(\frac{\partial \log f_{\theta}(\boldsymbol{x})}{\partial \theta(j)}\right)\right]\\
&=\gamma \mathcal{I}_{\theta}(i,j).
\end{align*}

Hence, $H_L^{\theta}=\gamma \mathcal{I}_{\theta}$.

The only difference in this derivation for the case where $\boldsymbol{x}$ is
discrete, is that in this case the final result would have used probabilities
rather than
probability densities. This is consistent, however, with the way Fisher
information is defined in this more general case. In fact, some sources
(e.g.,~\citealp{bobkov2014fisher}) define the Fisher information directly from
Radon-Nikodym derivatives.

As a final point in the proof, we remark that $\gamma$
must be nonzero, because had it been zero, $H_L^{\theta}$
would have been zero for every $\theta$ in every elementary
$\boldsymbol{\theta}$-continuous estimation problem of the same type,
contrary to our sensitivity assumption on the loss function.
\end{proof}

\begin{theorem}[Theorem~3.6 of the main text]\label{T:semicontinuous}
If $(\boldsymbol{x},\boldsymbol{\theta})$ is an elementary, semi-continuous
estimation problem
for which $\hat{\theta}_{\text{WF}}$ is a well-defined
set estimator, and if $L$ is a smooth loss function,
sensitive on $(\boldsymbol{x},\boldsymbol{\theta})$'s type,
that satisfies all of IIA, IRP, IRO and ISI, then
\[
\hat{\theta}^L_{\text{EIC}}=\hat{\theta}_{\text{WF}}.
\]
\end{theorem}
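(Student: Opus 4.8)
The plan is to reproduce the three-stage architecture of the proof of Theorem~\ref{T:main}, with ISI taking over the role problem continuity played there. Stage one is immediate: a semi-continuous problem is in particular $\boldsymbol{\theta}$-continuous and $L$ satisfies IIA and IRP, so Lemma~\ref{L:likelihood} already makes $L$ conditional-distribution-based, and we may write $L_{(\boldsymbol{x},\boldsymbol{\theta})}(\theta_1,\theta_2)=L(P_{\theta_1},P_{\theta_2})$ with the $P_\theta$ now discrete data distributions. Stage three is a verbatim copy of the endgame of Theorem~\ref{T:main}: once we have a nonzero constant $\gamma$, depending only on $L$ and the problem type, with $H_L^{\theta}=\gamma\,\mathcal{I}_{\theta}$ for all $\theta$, well-definedness of $\hat{\theta}_{\text{WF}}$ forces $\mathcal{I}_{\theta}$ positive definite; $\gamma$ cannot be negative, since a smooth nonnegative $L$ vanishing at $(\theta,\theta)$ has positive semidefinite Hessian there; hence $H_L^{\theta}$ is positive definite, $\hat{\theta}^L_{\text{EIC}}$ is well-defined, and since $\sqrt{|H_L^{\theta}|}=\gamma^{M/2}\sqrt{|\mathcal{I}_{\theta}|}$ differs from $\sqrt{|\mathcal{I}_{\theta}|}$ by a $\theta$-independent factor, $\hat{\theta}^L_{\text{EIC}}=\argmax_{\theta}f(\theta|x)/\sqrt{|H_L^{\theta}|}=\hat{\theta}_{\text{WF}}$. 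The real content is stage two: the semi-continuous counterpart of Lemma~\ref{L:r}, namely that $L(P,Q)$ depends only on $c[P,Q]$ and on the type. Granting that, the hypotheses of Lemma~\ref{L:Fisher} hold (smoothness and sensitivity from the theorem, conditional-distribution-basedness from Lemma~\ref{L:likelihood}), and since the problem is semi-continuous and $L$ satisfies ISI, case~(2) of that lemma delivers $\gamma$.

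For stage two I would run the canonical-form argument behind Lemma~\ref{L:r}, but replace its limiting rearrangement --- exactly the step needing problem continuity --- by exact moves licensed by IRO and ISI. IRO makes the loss invariant under relabeling the countable support of $\boldsymbol{x}$. ISI lets us \emph{subdivide} atoms: appending to $\boldsymbol{x}$ a discrete coordinate $\boldsymbol{b}$ whose conditional law given $\boldsymbol{x}$ is fixed gives $\boldsymbol{b}\perp\boldsymbol{\theta}\mid\boldsymbol{x}$; the enlarged problem stays semi-continuous and elementary because $f(\theta\mid x,b)=f(\theta\mid x)$, so ISI preserves the loss, and a subsequent IRO relabeling turns this into an honest subdivision of each atom into pieces of prescribed probabilities, which moreover preserves each atom's Radon--Nikodym value $r_{P,Q}$. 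Now take two semi-continuous problems of the same type with $c[P,Q]$ equal: for each value $\rho$ attained by $r_{P,Q}$ the two problems carry the same total $Q$-mass on atoms with $r_{P,Q}=\rho$, so subdividing each according to a common refinement of the two induced mass partitions brings both to the \emph{same} pair $(P,Q)$ (equal atom masses, equal $r$-values, hence $p=\rho q$ piece by piece), and therefore --- conditional-distribution-basedness --- to equal loss. That is the required $c$-dependence.

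The main obstacle is that ISI supplies only subdivision, never its reverse: two atoms sharing a Radon--Nikodym value for the single pair $(\theta_1,\theta_2)$ need not share a likelihood ratio at other parameters, so they cannot be merged by any ISI move. The argument above sidesteps merging by refining \emph{both} problems to a common subdivision rather than collapsing either, which is legitimate precisely because, $L$ being conditional-distribution-based, the disagreement of the two refined problems at parameters other than $\theta_1,\theta_2$ is immaterial. I expect most of the remaining work to be bookkeeping around this move --- countably infinite atom sets, the added $\boldsymbol{b}$-coordinate (which enlarges the observation dimension and is therefore absorbed into the allowed type dependence), and verifying at each step that one remains in the elementary semi-continuous class --- with no further conceptual hurdle, everything up- and downstream being a transcription of the proof of Theorem~\ref{T:main}.
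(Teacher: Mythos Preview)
Your proposal is correct and follows essentially the same architecture as the paper's proof: invoke Lemma~\ref{L:likelihood}, replace Lemma~\ref{L:r} by an ISI-based argument that $L(P,Q)$ depends only on $c[P,Q]$ (and the type), then apply case~(2) of Lemma~\ref{L:Fisher} and finish exactly as in Theorem~\ref{T:main}. The paper's proof idea leaves the ISI step entirely to the Supplementary Information, so your atom-subdivision-and-common-refinement mechanism is your own (sound) elaboration of a step the paper merely asserts; the bookkeeping issues you flag (countable supports, the extra observation coordinate, remaining elementary) are genuine but, as you say, not conceptual.
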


\begin{proof}
The proof is essentially identical to that of Theorem~\ref{T:main}. The only
change is that we can no longer apply Lemma~\ref{L:r}. However, we claim that
$L(P,Q)$ only depends on $c[P,Q]$ and $M$ despite this, for which reason we can
still apply Lemma~\ref{L:Fisher}, as before, to complete the proof.

Fix $M$, the dimension of $\Theta$.

To prove that $L(P,Q)$ only depends on $c[P,Q]$,
let $(\boldsymbol{x},\boldsymbol{\theta})$ be an elementary, semi-continuous
estimation problem,
and let $L$ be any smooth loss function,
sensitive on its type, that satisfies all of IIA, IRP, IRO
and ISI. Furthermore, fix $\theta_1,\theta_2\in\Theta$, and let $P$ be the
distribution $P_{\theta_1}$ and $Q$ be the distribution $P_{\theta_2}$. We will
show that $L(P,Q)$ can only depend on $c[P,Q]$.

We note that we can freely assume $P\ne Q$, because
\[
c[P,Q]=\chi_{(0,1]} \Leftrightarrow P=Q \Leftrightarrow L(P,Q)=0,
\]
where the first equivalence follows from the definition of $c[P,Q]$ (for
discrete distributions $P$ and $Q$) and the second equivalence follows from
(1.1).

The assumption $P\ne Q$ also implies
that the two distributions are not linearly dependent.

We begin by describing a new estimation problem
$((\boldsymbol{x},\boldsymbol{y},\boldsymbol{z}),\boldsymbol{\theta})$,
where $\boldsymbol{y}$ is a random variable independent of $\boldsymbol{x}$ and
of $\boldsymbol{\theta}$, which is uniformly distributed in $\{1,\ldots,2^M\}$,
and where $\boldsymbol{z}=r_{P,Q}(\boldsymbol{x})$.

We know that
$L_{(\boldsymbol{x},\boldsymbol{\theta})}(\theta_1,\theta_2)
=L_{((\boldsymbol{x},\boldsymbol{y},\boldsymbol{z}),\boldsymbol{\theta})}(\theta_1,\theta_2)
=L_{((\boldsymbol{z},\boldsymbol{y},\boldsymbol{x}),\boldsymbol{\theta})}(\theta_1,\theta_2)$,
because the first equality stems from ISI and the second from IRO.

In principle, we now wish to apply ISI again, in order to show that
\begin{equation}\label{Eq:rISI}
L_{((\boldsymbol{z},\boldsymbol{y},\boldsymbol{x}),\boldsymbol{\theta})}(\theta_1,\theta_2)
=L_{((\boldsymbol{z},\boldsymbol{y}),\boldsymbol{\theta})}(\theta_1,\theta_2),
\end{equation}
because if we can establish \eqref{Eq:rISI} then we are done. The reason for
this is that from Lemma~\ref{L:likelihood} we know that the value of
$L_{((\boldsymbol{z},\boldsymbol{y}),\boldsymbol{\theta})}(\theta_1,\theta_2)$
can only depend on the conditional data distributions at $\theta_1$ and $\theta_2$,
and by construction these conditional data distributions depend only on the distribution of
$\boldsymbol{z}=r_{P,Q}(\boldsymbol{x})$ at $\boldsymbol{x}\sim P$ and at
$\boldsymbol{x}\sim Q$, in both of which the distribution of
$\boldsymbol{z}$ is fully constructible from knowledge of $c[P,Q]$.

Unfortunately, it is not possible for us to apply ISI directly to prove
\eqref{Eq:rISI}. The reason for this is that even though (as we will
demonstrate) $\boldsymbol{x}$ is independent of the choice of $\theta_1$ versus
$\theta_2$ given $(\boldsymbol{z},\boldsymbol{y})$, this
independence may not extend to all other parameter choices in $\Theta$.

We therefore first apply Lemma~\ref{L:likelihood} in order to show that
$L_{((\boldsymbol{z},\boldsymbol{y},\boldsymbol{x}),\boldsymbol{\theta})}(\theta_1,\theta_2)
=L_{((\boldsymbol{z},\boldsymbol{y},\boldsymbol{x}),\boldsymbol{\theta'})}(\theta'_1,\theta'_2)$,
for any elementary estimation problem
$((\boldsymbol{z},\boldsymbol{y},\boldsymbol{x}),\boldsymbol{\theta'})$ over
any parameter domain $\Theta'$ of the same dimension, as long as 
the distribution of $(\boldsymbol{z},\boldsymbol{y},\boldsymbol{x})$ given
$\boldsymbol{\theta'}=\theta'_1$ is the same as its distribution given
$\boldsymbol{\theta}=\theta_1$, and its distribution given
$\boldsymbol{\theta'}=\theta'_2$ is the same as given
$\boldsymbol{\theta}=\theta_2$.

We will construct such an estimation problem where $\boldsymbol{x}$ is
independent of $\boldsymbol{\theta'}$ given $(\boldsymbol{z},\boldsymbol{y})$
across the entire range of $\boldsymbol{\theta'}$.

To do this, let $\Theta'$ be the unit cube of dimension $M$, and let
$\theta'_1=(1,0,\ldots,0)$ and $\theta'_2=(0,\ldots,0)$. The conditional data distributions at
$\theta'_1$ and $\theta'_2$ will be the same as in $\theta_1$ and $\theta_2$
in the original problem, so as to enable us to apply Lemma~\ref{L:likelihood}.

We will retain the definition of $\boldsymbol{z}$ as $r_{P,Q}(\boldsymbol{x})$.
The variable $\boldsymbol{y}$ will remain independent of $\boldsymbol{x}$, but
in the new problem it will depend on $\boldsymbol{\theta'}$. Specifically, it
will remain uniformly distributed over $\{1,\ldots,2^M\}$ only for
$\boldsymbol{\theta'}$ values of the form $(\alpha,0,\ldots,0)$. In the
remaining $2^M-2$ corners of the unit cube, $\boldsymbol{y}$ will have other
distributions, arbitrarily chosen from all distributions over the same support,
such that all $2^M-1$ such distributions of $\boldsymbol{y}$ are linearly
independent (which is clearly possible to do).

The distribution of $\boldsymbol{x}$ will be $P$ at all corners of the unit
cube other than in the origin.

Elsewhere in the unit cube, away from its corners, the distribution of
$(\boldsymbol{z},\boldsymbol{y},\boldsymbol{x})$ will be the multilinear
continuation of their distributions at the corners. Because, by the
construction of $\boldsymbol{y}$, we know the corner distributions to all
be linearly independent other than at $\theta'_1$ and $\theta'_2$, the
constructed problem is well-defined (in that it
satisfies (1.1)), and is elementary.

We will now show that $\boldsymbol{x}$ is independent of $\boldsymbol{\theta'}$
given $(\boldsymbol{z},\boldsymbol{y})$. Consider any specific value $x$ of
$\boldsymbol{x}$ and $y$ of $\boldsymbol{y}$,
and let $z=r_{P,Q}(x)$ be the $\boldsymbol{z}$ value corresponding to
$\boldsymbol{x}=x$.
Furthermore, let $X_z$ be the set of all values of $\boldsymbol{x}$
that share the same $\boldsymbol{z}$ value. The variable $\boldsymbol{x}$
is independent of $\boldsymbol{\theta'}$ given
$(\boldsymbol{z},\boldsymbol{y})$ if for every
such $x$, the conditional probability
$\mathbf{P}(\boldsymbol{x}=x|(\boldsymbol{z},\boldsymbol{y})=(z,y), \boldsymbol{\theta'}=\theta')$
is not a function of $\theta'$. This conditional probability can be computed as
\begin{equation}\label{Eq:condprob}
\begin{aligned}
\mathbf{P}(\boldsymbol{x}=x|(\boldsymbol{z},\boldsymbol{y})=(z,y), \boldsymbol{\theta'}=\theta')
&=\frac{\mathbf{P}(\boldsymbol{x}=x,\boldsymbol{y}=y|\boldsymbol{\theta'}=\theta')}{\sum_{x'\in X_z} \mathbf{P}(\boldsymbol{x}=x',\boldsymbol{y}=y|\boldsymbol{\theta'}=\theta')} \\
&=\frac{\mathbf{P}(\boldsymbol{x}=x|\boldsymbol{\theta'}=\theta') \mathbf{P}(\boldsymbol{y}=y|\boldsymbol{\theta'}=\theta')}{\sum_{x'\in X_z} \mathbf{P}(\boldsymbol{x}=x'|\boldsymbol{\theta'}=\theta') \mathbf{P}(\boldsymbol{y}=y|\boldsymbol{\theta'}=\theta')} \\
&=\frac{\mathbf{P}(\boldsymbol{x}=x|\boldsymbol{\theta'}=\theta')}{\sum_{x'\in X_z} \mathbf{P}(\boldsymbol{x}=x'|\boldsymbol{\theta'}=\theta')}.
\end{aligned}
\end{equation}

Consider the distribution of $\boldsymbol{x}$ at
$\boldsymbol{\theta'}=\theta'$. By construction, it is
$\gamma P + (1-\gamma) Q$, for some $0\le\gamma\le 1$. Substituting this into
\eqref{Eq:condprob}, we get
\begin{align*}
\mathbf{P}(\boldsymbol{x}=x|(\boldsymbol{z},\boldsymbol{y})=(z,y), \boldsymbol{\theta'}=\theta')
&=\frac{\gamma \mathbf{P}_{\boldsymbol{x}\sim P}(x)+(1-\gamma) \mathbf{P}_{\boldsymbol{x}\sim Q}(x)}{\sum_{x'\in X_z} \gamma \mathbf{P}_{\boldsymbol{x}\sim P}(x')+(1-\gamma) \mathbf{P}_{\boldsymbol{x}\sim Q}(x')} \\
&=\frac{\gamma r_{P,Q}(x) \mathbf{P}_{\boldsymbol{x}\sim Q}(x)+(1-\gamma) \mathbf{P}_{\boldsymbol{x}\sim Q}(x)}{\sum_{x'\in X_z} \gamma r_{P,Q}(x') \mathbf{P}_{\boldsymbol{x}\sim Q}(x')+(1-\gamma) \mathbf{P}_{\boldsymbol{x}\sim Q}(x')}\\
&=\frac{\gamma z \mathbf{P}_{\boldsymbol{x}\sim Q}(x)+(1-\gamma) \mathbf{P}_{\boldsymbol{x}\sim Q}(x)}{\sum_{x'\in X_z} \gamma z \mathbf{P}_{\boldsymbol{x}\sim Q}(x')+(1-\gamma) \mathbf{P}_{\boldsymbol{x}\sim Q}(x')}\\
&=\frac{(\gamma z + (1-\gamma)) \mathbf{P}_{\boldsymbol{x}\sim Q}(x)}{\sum_{x'\in X_z} (\gamma z +(1-\gamma)) \mathbf{P}_{\boldsymbol{x}\sim Q}(x')}\\
&=\frac{\mathbf{P}_{\boldsymbol{x}\sim Q}(x)}{\sum_{x'\in X_z} \mathbf{P}_{\boldsymbol{x}\sim Q}(x')}.
\end{align*}

This is clearly not a function of $\theta'$, and hence the independence of
$\boldsymbol{x}$ is proved, and ISI can be employed to show
\[
L_{((\boldsymbol{z},\boldsymbol{y},\boldsymbol{x}),\boldsymbol{\theta'})}(\theta'_1,\theta'_2)
=L_{((\boldsymbol{z},\boldsymbol{y}),\boldsymbol{\theta'})}(\theta'_1,\theta'_2)
=L_{((\boldsymbol{z},\boldsymbol{y}),\boldsymbol{\theta})}(\theta_1,\theta_2),
\]
where the final equality is given, once again, by Lemma~\ref{L:likelihood}.

Thus, $L(P,Q)$ depends only on $c[P,Q]$ and $M$, and we can use
Lemma~\ref{L:Fisher}, as in Theorem~\ref{T:main},
to complete the proof.
\end{proof}

\subsection{Feasibility and necessity}\label{SS:feasibility}

\begin{theorem}[Theorem~4.3 of the main text]\label{T:feasibility}
Our system of axioms is feasible, in the sense that there exist
smooth and problem continuous
loss functions, $L$, sensitive on all $\boldsymbol{\theta}$-continuous
problem types, that satisfy all of IRP, IRO, IIA and ISI.
\end{theorem}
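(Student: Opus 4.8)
The plan is to exhibit one explicit loss function and verify the requirements one at a time; the candidate suggested by Lemma~\ref{L:Fisher} is the squared Hellinger $f$-divergence between data distributions,
\[
L_{(\boldsymbol{x},\boldsymbol{\theta})}(\theta_1,\theta_2)\defeq\int_X\left(\sqrt{f_{\theta_1}(x)}-\sqrt{f_{\theta_2}(x)}\right)^2\text{d}x=2\left(1-\int_X\sqrt{f_{\theta_1}(x)f_{\theta_2}(x)}\,\text{d}x\right),
\]
with the integral replaced by a sum over $X$ when $\boldsymbol{x}$ is discrete. This is the $f$-divergence generated by the strictly convex $f(t)=(\sqrt{t}-1)^2$ with $f(1)=0$; it is bounded in $[0,2]$, which is what keeps the domination arguments below manageable. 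Any sufficiently smooth, bounded, strictly convex generator would serve equally well (this is the ``abundance of witnesses'' noted in Section~\ref{S:introduction}); I single out squared Hellinger only for concreteness.

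First I would record that $L$ is a legitimate loss function: since in an estimation problem all data distributions share the support $X$ and are mutually absolutely continuous, the integral is finite and in $[0,1]$, so $L\ge 0$; and strict convexity of $f$ gives $L(\theta_1,\theta_2)=0\iff f_{\theta_1}=f_{\theta_2}\iff P_{\theta_1}=P_{\theta_2}$, which, the data distributions being distinct, is exactly \eqref{Eq:distinct}. For the four axioms I would note that $L_{(\boldsymbol{x},\boldsymbol{\theta})}(\theta_1,\theta_2)$ is manifestly a function of $P_{\theta_1}$ and $P_{\theta_2}$ alone, hence conditional-distribution-based: it depends neither on the priors nor on the problem at parameter values other than $\theta_1,\theta_2$, which is IIA; and since a reparametrisation $F$ leaves every data distribution unchanged ($P^{(\boldsymbol{x},F(\boldsymbol{\theta}))}_{F(\theta)}=P^{(\boldsymbol{x},\boldsymbol{\theta})}_{\theta}$), this also gives IRP. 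For IRO I would invoke the classical invariance of $f$-divergences under invertible transformations of the sample space: under a change of variables $x'=G(x)$ the density ratio $r_{P,Q}$ and its law under $Q$ are unchanged and the Jacobians cancel in $\int f(r_{P,Q})\,\text{d}Q$ (and for discrete $\boldsymbol{x}$, $G$ is a mere relabelling). For ISI, if $\boldsymbol{y}$ is independent of $\boldsymbol{\theta}$ given $\boldsymbol{x}$ then $P^{((\boldsymbol{x},\boldsymbol{y}),\boldsymbol{\theta})}_{\theta_1}$ and $P^{((\boldsymbol{x},\boldsymbol{y}),\boldsymbol{\theta})}_{\theta_2}$ factor through a common conditional kernel $k(y\mid x)$; the kernel cancels in the ratio and integrates out to $1$, leaving $\int f(f_{\theta_1}/f_{\theta_2})f_{\theta_2}\,\text{d}x$, i.e.\ exact equality with the $\boldsymbol{x}$-only loss. (Equivalently, all four follow from $L$ being a functional of $c[P,Q]$ and $M$, via Lemmas~\ref{L:likelihood}--\ref{L:r} and the analogous invariance of $c$.)

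It then remains to check the three regularity properties. For \emph{sensitivity}, I would use the standard local expansion $L(\theta,\theta_0)=\tfrac14(\theta-\theta_0)^{\top}\mathcal{I}_{\theta_0}(\theta-\theta_0)+o(|\theta-\theta_0|^2)$, so $H_L^{\theta_0}=\tfrac12\mathcal{I}_{\theta_0}$ (consistent with Lemma~\ref{L:Fisher}, $\gamma=\tfrac12\neq 0$), and then exhibit, for every $\boldsymbol{\theta}$-continuous type, a single elementary problem with a point of non-degenerate Fisher information --- e.g.\ a $\theta$-translated product of a fixed smooth positive density on $\mathbb{R}$ for continuous $\boldsymbol{x}$, or a Bernoulli/logistic product for discrete $\boldsymbol{x}$ --- making $H_L^{\theta_0}\neq 0$ there. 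For \emph{problem continuity}, I would write $L=2-2\int\sqrt{f_{\theta_1}f_{\theta_2}}$ and, given $(f^{(\boldsymbol{x}_i,\boldsymbol{\theta})}_\theta)\xrightarrow{\mathcal{M}} f^{(\boldsymbol{x},\boldsymbol{\theta})}_\theta$ at every $\theta$, pass to the limit by a generalised dominated convergence (Pratt) argument: $\sqrt{f^{(i)}_{\theta_1}f^{(i)}_{\theta_2}}\le\tfrac12(f^{(i)}_{\theta_1}+f^{(i)}_{\theta_2})$, the dominating functions converge in measure to $\tfrac12(f_{\theta_1}+f_{\theta_2})$ with equal unit integrals, and the integrands converge in measure to $\sqrt{f_{\theta_1}f_{\theta_2}}$; alternatively one may route this through the representation $L=\int_0^1(\sqrt{c[P,Q](t)}-1)^2\,\text{d}t$ and Lemma~\ref{L:ccont}.

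The main obstacle is \emph{smoothness}: showing that $\theta_1\mapsto L_{(\boldsymbol{x},\boldsymbol{\theta})}(\theta_1,\theta_2)$ is three-times continuously differentiable on \emph{every} elementary $\boldsymbol{\theta}$-continuous problem, including badly-tailed ones. The strategy is to differentiate under the integral, exploiting the cancellation $\partial_{\theta_1(i)}\!\int f\!\left(\tfrac{f_{\theta_1}}{f_{\theta_2}}\right)f_{\theta_2}\,\text{d}x=\int f'\!\left(\tfrac{f_{\theta_1}}{f_{\theta_2}}\right)\partial_{\theta_1(i)}f_{\theta_1}\,\text{d}x$ (and similarly for the second and third derivatives, where the residual $1/f_{\theta_2}$ factors are controlled by the square-root form of $f$), and to justify each interchange by local domination bounds drawn from the elementary-problem hypotheses --- $C^3$-dependence of $f(\theta\mid x)$ on $\theta$, piecewise continuity of $f(x,\theta)$ in $x$, strict positivity of densities on the common support --- together with the global bound $L\le 2$. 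This is a careful but routine-in-spirit application of Leibniz's rule; being the longest part, I would state the outcome in the main text and relegate the bookkeeping to the SI, Section~A.
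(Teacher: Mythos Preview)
Your approach is essentially the same as the paper's: both exhibit squared Hellinger distance (as a representative $f$-divergence) and verify the axioms and regularity properties directly, with the paper stating slightly more general sufficient conditions on the generator $F$ (three continuous derivatives, $F''(1)>0$, and boundedness of $F$ at $0$ and $F'$ at $\infty$) where you note informally that any sufficiently smooth bounded strictly convex generator would do. Your verification of the four axioms, sensitivity via $H_L^\theta=\tfrac12\mathcal{I}_\theta$, and problem continuity via Pratt-type dominated convergence all track the paper's intent; the smoothness argument is, as you correctly flag, the part requiring the most bookkeeping, and the paper likewise defers it to the SI.
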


While the existence of such a loss function $L$ may not seem trivial, there are,
in fact, many loss functions that satisfy the necessary requirements.
We will show how to construct an infinite family of such $L$.

Our family of loss functions will be a subset of the
$f$-divergences~\citep{ali1966general}.

An $f$-divergence is a loss function $L$ that can be computed, most generally,
as
\[
L(P,Q)=\mathbf{E}_Q\left[F(r_{P,Q})\right].
\]
This is usually expressed as
\[
L(P,Q)=\int_X F(f_P(x)/f_Q(x)) f_Q(x) \text{d}x,
\]
which represents the case where $P$ and $Q$ are continuous distributions.

We call $F:\mathbb{R}^{>0}\to\mathbb{R}$ the $F$-function of the $f$-divergence
(refraining from using the more common term ``$f$-function'' so as to avoid
unnecessary confusion with our probability density functions).
It should be convex and satisfy $F(1)=0$.

\begin{lemma}\label{L:4axioms}
All $f$-divergences $L$ satisfy all of IRP, IRO, IIA and ISI.
\end{lemma}

\begin{proof}
As we have already established in Lemma~\ref{L:likelihood}, satisfying IRP
and IIA, our two parameter-space axioms, is tantamount to requiring that $L$
is a conditional-distribution-based loss function. The two observation-space axioms, IRO and
ISI, add that the loss must depend only on a sufficient statistic for the
observation. The former is proved directly from the definition
of $f$-divergences. The latter is among the most basic properties of
$f$-divergences (See, e.g., \cite{amari2000methods}).
\end{proof}

While all $f$-divergences satisfy IRP, IRO, IIA and ISI, not all of them
satisfy smoothness, sensitivity and problem continuity.

\begin{lemma}\label{L:wb}
Any $f$-divergence whose $F$-function satisfies the following conditions
\begin{enumerate}
\item $F$ has 3 continuous derivatives, and
\item $F''(1)>0$
\end{enumerate}
is smooth and is sensitive on all
$\boldsymbol{\theta}$-continuous problem types. If it also satisfies
\begin{enumerate}
\item $\lim_{x\to 0} F(x)<\infty$, and
\item $\lim_{x\to\infty} F'(x)<\infty$,
\end{enumerate}
it is also problem continuous.

Here, $F'$ and $F''$ are the first two derivatives of $F$,
which we assumed exist.
\end{lemma}

The main difficulty in Lemma~\ref{L:wb} is proving problem continuity.
We will build up to this using the following two lemmas.

\begin{lemma}\label{L:bound}
If $F:\mathbb{R}^{> 0}\to\mathbb{R}$ is continuously differentiable and
convex, and the limit $\lim_{x\to 0} F(x)$ is bounded, then
there is a value $x_0>0$, such that for every $x$ in $(0,x_0]$, $F'(x)>-1/x$,
where $F'$ is the derivative of $F$.
\end{lemma}

\begin{proof}
Because we know that $F$ is continuously differentiable,
we know that $F'$ exists and is continuous in this range. Furthermore, because
$F$ is convex, $F'$ is monotone increasing.

Let us assume, to the contrary, that no $x_0$ as in the claim exists. Then
there exists an infinite decreasing sequence $x_1,\ldots$, converging to zero,
for which $F'(x_i)\le -1/x_i$.

Because we know that $F'$ is monotone increasing, an upper bound for it in
the range $(x_{i+1},x_i]$ is $F'(x_i)$ and therefore also $-1/x_i$.

Integrating this, we see that a lower bound for $F(x_{i+1})$ is
\[
F(x_{i+1})\ge F(x_1)+\sum_{j=1}^{i} (x_j-x_{j+1}) (1/x_j).
\]
We will show, however, that as $i$ goes to infinity, this lower bound also
diverges to infinity, contradicting our assumption that
$\lim_{x\to 0} F(x)$ is bounded.

To see this, consider the series summand. This equals $1-x_{j+1}/x_j$. For
the series to converge, this summand must converge to $0$, so for all $j$
at least as large as some $j_0$ we have that there is some $r$ for which
$r\le x_{j+1}/x_{j}\le 1$. In this range, we know that
\[
\log(x_{j+1}/x_j)\ge(x_{j+1}/x_j-1) \frac{\log(r)}{r-1},
\]
by the concavity of the $\log$ function.

We can therefore conclude the following regarding the series partial sum.
\[
\sum_{j=j_0}^{\infty} 1-x_{j+1}/x_j
\ge -\frac{r-1}{\log(r)} \sum_{j=j_0}^{\infty} \log(x_{j+1}/x_j)
= -\frac{r-1}{\log(r)} \lim_{j\to\infty} \log(x_{j}/x_{j_0})
= \infty,
\]
proving the claim.
\end{proof}

\begin{lemma}\label{L:mcontinuity}
If $F:\mathbb{R}^{>0}\to\mathbb{R}$ is the $F$-function of an $f$-divergence,
$F$ is continuously differentiable, $\lim_{x\to 0} F(x)<\infty$ and
$\lim_{x\to\infty} F'(x)<\infty$, where $F'$ is the derivative of $F$, then
the $f$-divergence is a problem-continuous loss function.
\end{lemma}

\begin{proof}
Let $P$ and $Q$ be probability distributions over a ground set $X$, and let
$\{P_i\}$ and $\{Q_i\}$ be sequences of distributions over ground sets $X_i$,
converging in measure to $P$ and $Q$, respectively.

The condition of convergence in measure is that for any $\epsilon>0$ and any
$\delta>0$ there is an $i_0$, such that for every $i\ge i_0$,
\[
\mathbf{P}_{\boldsymbol{x}\sim P}\left(1-\epsilon\le\frac{\text{d} P_i}{\text{d} P}(\boldsymbol{x})\le 1+\epsilon\right)\ge 1-\delta,
\]
and
\[
\mathbf{P}_{\boldsymbol{x}\sim Q}\left(1-\epsilon\le\frac{\text{d} Q_i}{\text{d} Q}(\boldsymbol{x})\le 1+\epsilon\right)\ge 1-\delta.
\]

We wish to prove that if these conditions hold, then the $f$-divergences
between $P_i$ and $Q_i$ converge to the $f$-divergence between $P$ and $Q$,
where the $f$-divergence between $P$ and $Q$ is calculated as
\[
D(P,Q) = \mathbf{E}_{\boldsymbol{x}\sim P}\left[ F\left(\frac{\text{d} Q}{\text{d} P}(\boldsymbol{x})\right)\right].
\]

While these formulations, using the Radon-Nikodym derivative
$\text{d}Q/\text{d}P$, are the most general, for convenience of presentation,
we will consider all distributions as continuous, describable by their
probability density functions (pdfs). We will use $f_P$ to describe the pdf
of $P$. Our proof will remain entirely general, but the use of pdfs to
describe distributions will make their presentations easiest.

Using pdfs, the claims can be reworded as follows. We know that for all
$\epsilon>0$ and $\delta>0$ a sufficiently large $i$ will satisfy
\[
\mathbf{P}_{\boldsymbol{x}\sim P}\left(1-\epsilon\le\frac{f_{P_i}(\boldsymbol{x})}{f_P(\boldsymbol{x})}\le 1+\epsilon\right)\ge 1-\delta
\]
and
\[
\mathbf{P}_{\boldsymbol{x}\sim Q}\left(1-\epsilon\le\frac{f_{Q_i}(\boldsymbol{x})}{f_Q(\boldsymbol{x})}\le 1+\epsilon\right)\ge 1-\delta,
\]
and wish to prove that 
\begin{equation}\label{Eq:divergences}
\lim_{i\to\infty} \int_{X_i} F\left(\frac{f_{Q_i}(x)}{f_{P_i}(x)}\right) f_{P_i}(x) \text{d}x =  \int_X F\left(\frac{f_{Q}(x)}{f_{P}(x)}\right) f_{P}(x) \text{d}x.
\end{equation}

For a given $\epsilon$ and a given $i$, let $X^\epsilon_i$ be the subset
of $X$ wherein 
\begin{equation}\label{Eq:P_approx}
1-\epsilon\le\frac{f_{P_i}(x)}{f_P(x)}\le 1+\epsilon
\end{equation}
and
\begin{equation}\label{Eq:Q_approx}
1-\epsilon\le\frac{f_{Q_i}(x)}{f_Q(x)}\le 1+\epsilon
\end{equation}
are both satisfied.

To prove \eqref{Eq:divergences}, we will separate the difference between the
left-hand side and the right-hand side to four elements, then show that all
four must equal zero. The four sub-differences we will look at are:
\begin{equation}\label{Eq:diffhatXP}
\lim_{i\to\infty} \int_{X\setminus X^\epsilon_i} F\left(\frac{f_{Q}(x)}{f_{P}(x)}\right) f_{P}(x) \text{d}x,
\end{equation}

\begin{equation}\label{Eq:diffhatXPi}
\lim_{i\to\infty} \int_{X_i\setminus X^\epsilon_i} F\left(\frac{f_{Q_i}(x)}{f_{P_i}(x)}\right) f_{P_i}(x) \text{d}x,
\end{equation}

\begin{equation}\label{Eq:diffPPi}
\lim_{i\to\infty} \int_{X^\epsilon_i} \left|F\left(\frac{f_{Q_i}(x)}{f_{P_i}(x)}\right)\right| |f_{P_i}(x)-f_P(x)| \text{d}x
\end{equation}
and
\begin{equation}\label{Eq:diffF}
\lim_{i\to\infty} \int_{X^\epsilon_i} \left|F\left(\frac{f_{Q_i}(x)}{f_{P_i}(x)}\right) - F\left(\frac{f_{Q}(x)}{f_{P}(x)}\right)\right| f_{P}(x) \text{d}x.
\end{equation}

Regarding $X\setminus X^\epsilon_i$, we know that it has some measure
$\mu_P=\mu_P(\epsilon,i)$ in $P$ and some measure $\mu_Q=\mu_Q(\epsilon,i)$ in
$Q$, both tending to $0$ as $i$ goes to infinity, for any $\epsilon$.
Correspondingly, $X^\epsilon_i$ has measure $1-\mu_P$ in $P$
and $1-\mu_Q$ in $Q$.

Because within $X^\epsilon_i$ \eqref{Eq:P_approx} and \eqref{Eq:Q_approx} hold,
the measure of $X^\epsilon_i$ in $P_i$ is at least
$(1-\mu_P)(1-\epsilon)$ and its measure in $Q_i$ is at least
$(1-\mu_Q)(1-\epsilon)$. Correspondingly, the measure of
$X_i\setminus X^\epsilon_i$ in $P_i$ is at most $\mu_P+\epsilon-\mu_P\epsilon$,
and its measure in $Q_i$ is at most $\mu_Q+\epsilon-\mu_Q\epsilon$.

We have assumed that
$F:\mathbb{R}^{>0}\to\mathbb{R}$ is continuously differentiable and that it is
convex (because it is an $F$-function for an $f$-divergence). Under these
preconditions, the conditions assumed, $\lim_{x\to 0} F(x)<\infty$ and
$\lim_{x\to\infty} F'(x)<\infty$, can easily be shown to be equivalent to the
condition that there exists $A\ge 0$ and $B\ge 0$ such that for all $x$,
$|F(x)|\le Ax+B$.

Using $A$ and $B$, we can now bound \eqref{Eq:diffhatXP} as follows.
\begin{align*}
\lim_{i\to\infty} \int_{X\setminus X^\epsilon_i} F\left(\frac{f_{Q}(x)}{f_{P}(x)}\right) f_{P}(x) \text{d}x
&\le \lim_{i\to\infty} \int_{X\setminus X^\epsilon_i} \left(A\frac{f_{Q}(x)}{f_{P}(x)}+B\right) f_P(x) \text{d}x \\
&= \lim_{i\to\infty} \int_{X\setminus X^\epsilon_i} A f_Q(x) \text{d}x + \int_{X\setminus X^\epsilon_i} B f_P(x) \text{d}x \\
&= \lim_{i\to\infty} A\mu_Q + B\mu_P \\
&= 0.
\end{align*}
We bound \eqref{Eq:diffhatXPi} similarly.
\begin{align*}
\lim_{i\to\infty} \int_{X_i\setminus X^\epsilon_i} F\left(\frac{f_{Q_i}(x)}{f_{P_i}(x)}\right) f_{P_i}(x) \text{d}x
&\le \lim_{i\to\infty} \int_{X_i\setminus X^\epsilon_i} \left(A\frac{f_{Q_i}(x)}{f_{P_i}(x)}+B\right) f_{P_i}(x) \text{d}x \\
&= \lim_{i\to\infty} \int_{X_i\setminus X^\epsilon_i} A f_{Q_i}(x) \text{d}x + \int_{X_i\setminus X^\epsilon_i} B f_{P_i}(x) \text{d}x \\
& \le \lim_{i\to\infty} A(\mu_Q+\epsilon-\mu_Q\epsilon) + B(\mu_P+\epsilon-\mu_Q\epsilon) \\
&= (A+B)\epsilon.
\end{align*}

Regarding \eqref{Eq:diffPPi}, note that within $X^\epsilon_i$ we know that
$|f_{P_i}(x)-f_P(x)|\le f_{P_i}(x) \frac{\epsilon}{1-\epsilon}$. For this
reason, we can bound \eqref{Eq:diffPPi} as follows.
\begin{align*}
\lim_{i\to\infty} \int_{X^\epsilon_i} \left|F\left(\frac{f_{Q_i}(x)}{f_{P_i}(x)}\right)\right|& |f_{P_i}(x)-f_P(x)| \text{d}x \\
&\le \lim_{i\to\infty} \int_{X^\epsilon_i} \left(A\frac{f_{Q_i}(x)}{f_{P_i}(x)}+B\right)\left(\frac{\epsilon}{1-\epsilon}\right) f_{P_i}(x) \text{d}x \\
&\le \lim_{i\to\infty} \left(\frac{\epsilon}{1-\epsilon}\right) \left(\int_{X_i} A f_{Q_i}(x) \text{d}x + \int_{X_i} B f_{P_i}(x) \text{d}x\right) \\
&= \left(\frac{\epsilon}{1-\epsilon}\right)(A+B).
\end{align*}

It now only remains to bound \eqref{Eq:diffF}. This is once again an integral
over $X^\epsilon_i$, which is a domain where we know that
\[
\frac{1-\epsilon}{1+\epsilon}\left(\frac{f_{Q}(x)}{f_{P}(x)}\right)
\le \frac{f_{Q_i}(x)}{f_{P_i}(x)}
\le \frac{1+\epsilon}{1-\epsilon}\left(\frac{f_{Q}(x)}{f_{P}(x)}\right).
\]
Let us therefore define $\alpha(x)$ as the value in the range
\[
\frac{1-\epsilon}{1+\epsilon}\le 1+\alpha(x)\le \frac{1+\epsilon}{1-\epsilon}
\]
that maximises
\[
\left|F\left((1+\alpha(x))\frac{f_{Q}(x)}{f_{P}(x)}\right) - F\left(\frac{f_{Q}(x)}{f_{P}(x)}\right)\right|.
\]

By the Mean Value Theorem, for each $x$ there is a $\gamma(x)$,
\[
\frac{1-\epsilon}{1+\epsilon}\le 1+\gamma(x)\le \frac{1+\epsilon}{1-\epsilon},
\]
such that
\[
F\left((1+\alpha(x))\frac{f_{Q}(x)}{f_{P}(x)}\right) - F\left(\frac{f_{Q}(x)}{f_{P}(x)}\right) = F'\left((1+\gamma(x))\frac{f_{Q}(x)}{f_{P}(x)}\right)\alpha(x)\frac{f_{Q}(x)}{f_{P}(x)}.
\]

The value of \eqref{Eq:diffF} can therefore be bounded from above by

\begin{align*}
\lim_{i\to\infty} & \int_{X^\epsilon_i} \left|F\left(\frac{f_{Q_i}(x)}{f_{P_i}(x)}\right) - F\left(\frac{f_{Q}(x)}{f_{P}(x)}\right)\right| f_{P}(x) \text{d}x \\
&\le \lim_{i\to\infty} \int_{X^\epsilon_i} \left|F'\left((1+\gamma(x))\frac{f_{Q}(x)}{f_{P}(x)}\right)\right||\alpha(x)|\frac{f_{Q}(x)}{f_{P}(x)}f_{P}(x) \text{d}x.
\end{align*}

As these equations should be valid for all $\epsilon$, let us assume that
$\epsilon\le 1/4$, in which case $\alpha(x)$ and $\gamma(x)$ are both bounded
in the range $[-2/5, 2/3]$.

Because of our assumption that $\lim_{y\to 0} F(y)<\infty$, we can now use
Lemma~\ref{L:bound} to know that there is a $y_0$ such that if
$y\le y_0$, the value of $F'(y)$ must be greater than $-1/y$.

Let us now partition $X^\epsilon_i$ to those $x$ values for which
$(1+\gamma(x))\frac{f_{Q}(x)}{f_{P}(x)}\le y_0$ and
$F'\left((1+\gamma(x))\frac{f_{Q}(x)}{f_{P}(x)}\right)<0$, which we will refer
to as $L^\epsilon_i$, and the rest, which we will refer to as $H^\epsilon_i$.

Within the domain $L^\epsilon_i$, we have
\begin{align*}
\lim_{i\to\infty} \int_{L^\epsilon_i} \left|F'\left((1+\gamma(x))\frac{f_{Q}(x)}{f_{P}(x)}\right)\right|&|\alpha(x)|\frac{f_{Q}(x)}{f_{P}(x)}f_{P}(x) \text{d}x \\
&\le \lim_{i\to\infty} \int_{L^\epsilon_i} \frac{|\alpha(x)|}{1+\gamma(x)} \left(\frac{f_P(x)}{f_Q(x)}\right)\frac{f_{Q}(x)}{f_{P}(x)}f_{P}(x) \text{d}x \\
&\le \lim_{i\to\infty} 2\epsilon\frac{1+\epsilon}{(1-\epsilon)^2} \int_X f_P(x)\text{d}x \\
&\le \frac{40}{9}\epsilon,
\end{align*}
where the last equality plugs in our condition $\epsilon\le 1/4$.

Finally, within the domain $H^\epsilon_i$, we know that
$|F'(x)|$ is bounded. This is because of the following conditions. First,
because $F$ is convex, $F'$ is monotone increasing, so its supremum is
$\lim_{x\to\infty} F'(x)$, which we assumed to be bounded.
On the other hand, its values must either satisfy
$F'\left((1+\gamma(x))\frac{f_{Q}(x)}{f_{P}(x)}\right)\ge 0$, in which case
they are bounded from below by $0$, or
$(1+\gamma(x))\frac{f_{Q}(x)}{f_{P}(x)}> y_0$, in which case they are bounded
from below by $F'(y_0)$.

Let $A'\ge 0$ be an upper bound for $|F'(x)|$.

We now have
\begin{align*}
\lim_{i\to\infty} \int_{H^\epsilon_i} \left|F'\left((1+\gamma(x))\frac{f_{Q}(x)}{f_{P}(x)}\right)\right||\alpha(x)|\frac{f_{Q}(x)}{f_{P}(x)}f_{P}(x) \text{d}x
&\le \lim_{i\to\infty} A' \frac{2\epsilon}{1-\epsilon} \int_X f_Q(x) \text{d}x \\
&\le \epsilon\frac{8 A'}{3},
\end{align*}
where the last equality, as before, stems from our assumption
$\epsilon\le 1/4$.

In total, we've established that
\begin{align}\label{Eq:finaldiff}
&\left|\lim_{i\to\infty} \int_{X_i} F\left(\frac{f_{Q_i}(x)}{f_{P_i}(x)}\right) f_{P_i}(x) \text{d}x -  \int_X F\left(\frac{f_{Q}(x)}{f_{P}(x)}\right) f_{P}(x) \text{d}x\right| \\
&\quad\quad\quad\quad\le \left(\frac{7}{3}(A+B)+\frac{40}{9}+\frac{8 A'}{3}\right)\epsilon,
\end{align}
under our assumption $\epsilon\le 1/4$.

Because $A$, $B$ and $A'$ are nonnegative constants, independent of $\epsilon$,
and because this equality must hold for any $\epsilon$ in $(0,1/4]$, we
conclude that the true difference on the left-hand side of \eqref{Eq:finaldiff}
(which is independent of $\epsilon$) must be zero, so
\eqref{Eq:divergences} holds and the lemma is proved.
\end{proof}

\begin{proof}[Proof of Lemma~\ref{L:wb}]
Any $f$-divergence whose $F$-function has 3 continuous derivatives satisfies
the smoothness condition. This can be verified simply by explicitly computing
the required derivatives from the $f$-divergence's formula. In particular, this
computation shows that the Hessian equals $F''(1)$ times the Fisher information
matrix. Therefore, if $F''(1)>0$ the $f$-divergence also satisfies sensitivity
for all problem types.
Problem continuity, if relevant, is then proved by Lemma~\ref{L:mcontinuity}.
\end{proof}

\begin{proof}[Proof of Theorem~\ref{T:feasibility}]
From Lemma~\ref{L:4axioms} and Lemma~\ref{L:wb}, we know that it is enough to
show an example of a loss function $L$ that is an $f$-divergence satisfying
the extra conditions of Lemma~\ref{L:wb}.

A concrete example of a commonly-used $L$ function satisfying all criteria
is squared Hellinger distance~\citep{pollard2002user},
\[
H^2(p,q)=\frac{1}{2}\int_X \left(\sqrt{p(x)}-\sqrt{q(x)}\right)^2\text{d} x,
\]
which is the $f$-divergence whose $F$-function is $F(r)=1-\sqrt{r}$.
\end{proof}

\begin{theorem}[Theorem~4.4 of the main paper]\label{T:nec_disc}
All axioms used in Theorem~3.1 are necessary.
\end{theorem}

\begin{proof}
Much as the proof of feasibility was simply a proof by example, proving
necessity will be done by counterexample: we will show alternate
loss functions, $L$, leading to alternate estimations, that satisfy all axioms
but one.
For each such $L$, we will show that its $|H^\theta_L|$ is not proportional to
$|\mathcal{I}_\theta|$, and hence produces an estimator different to WF.

A well-known loss function that satisfies IRO and IIA but not IRP is
quadratic loss,
\begin{equation}\label{Eq:quadloss}
L(\theta_1,\theta_2)=|\theta_1-\theta_2|^2.
\end{equation}
As was demonstrated in the main paper,
an error intolerant estimator with this loss function yields the
continuous MAP estimate, different to the WF estimate.

A loss function satisfying IRP and IIA but not IRO is
\begin{equation}\label{Eq:noIROcont}
L(P,Q)=\int_X f_Q(x) (f_P(x)-f_Q(x))^2 \text{d}x,
\end{equation}
which is the expected
square difference between the probability densities at $\boldsymbol{x}\sim Q$.
Calculating $H_L^\theta$ we get
\[
H_L^\theta(i,j)=\mathbf{E}_{\boldsymbol{x}\sim f_\theta}\left[2\left(\frac{\partial f_\theta(\boldsymbol{x})}{\partial \theta(i)}\right)\left(\frac{\partial f_\theta(\boldsymbol{x})}{\partial \theta(j)}\right)\right],
\]
which is different to the Fisher information matrix, and defines an
estimator that is not WF.

To prove necessity of our third axiom, IIA,
we construct a loss function $L$ that satisfies IRP and IRO but not IIA as
follows.
Let $L_1$ and $L_2$ be two smooth and problem continuous
loss functions, sensitive on the relevant problem type, satisfying all axioms
(such as, for example, two $f$-divergences matching the criteria of
Lemma~\ref{L:wb}) and let $t$ be a threshold value.

Consider the function
\begin{equation}\label{Eq:noIIA_threshold}
P(\theta)=\mathbf{P}(L_1(\boldsymbol{\theta},\theta)\le t).
\end{equation}

By construction, this function is independent of representation.

Define
\begin{equation}\label{Eq:noIIA}
L(\theta_1,\theta_2)=P(\theta_2) L_2(\theta_1,\theta_2).
\end{equation}

The resulting $|H^\theta_L|$ equals
$P(\theta)^M |H_{L_2}^\theta|$,
where the equality stems from the fact that
$P(\theta_2)$ is independent of $\theta_1$ and therefore acts as a
constant multiplier in the calculation of the Hessian.

This new estimator is different to the Wallace-Freeman estimator in the fact
that it adds a weighing factor $P(\theta)^{M/2}$.
\end{proof}

\begin{theorem}[Theorem~4.5 of the main paper]\label{T:nec_disc2}
All axioms used in Theorem~3.6 are necessary.
\end{theorem}

\begin{proof}
This proof is a direct continuation of the proof of Theorem~\ref{T:nec_disc},
and re-uses the same techniques and some of the same examples.

To begin with, quadratic loss, given in \eqref{Eq:quadloss} as an example of
a loss function that satisfies IIA and IRO but not IRP also demonstrates that
IRP is necessary in the semi-continuous case because (by not depending on
the estimation problem at all) it also satisfies ISI.

Similarly, the example of \eqref{Eq:noIIA} can be re-used here to demonstrate
the necessity of IIA also in the semi-continuous case. We have already shown
that this loss function satisfies IRP and IRO. To show the remaining condition,
namely that it also satisfies ISI, one merely needs to choose two
loss functions $L_1$ and $L_2$ that satisfy the conditions of
Theorem~\ref{T:feasibility}. By assumption, these satisfy ISI,
and so by construction \eqref{Eq:noIIA_threshold} is independent of
superfluous information and \eqref{Eq:noIIA} must be, too, proving the claim.

To demonstrate that ISI is necessary, consider
\begin{equation}\label{Eq:noISI}
L_1(P,Q)=\sum_{x\in X} \mathbf{P}_{\mathbf{x}\sim Q}(\mathbf{x}=x) \left(\mathbf{P}_{\mathbf{x}\sim P}(\mathbf{x}=x)-\mathbf{P}_{\mathbf{x}\sim Q}(\mathbf{x}=x)\right)^2,
\end{equation}
which is the discrete analogue of \eqref{Eq:noIROcont}.

This loss function clearly satisfies IRP and IIA, just as \eqref{Eq:noIROcont}
does. Neither satisfies ISI, however: if we, for example, replace
$\mathbf{x}$ by $(\mathbf{x},\mathbf{y})$, where $\mathbf{y}$ is a Bernoulli
random variable with $p=1/2$, losses will not be preserved. (All losses will,
in fact, be uniformly scaled down by a factor of $4$.)

In the proof of Theorem~\ref{T:nec_disc} we use \eqref{Eq:noIROcont} as an
example of a loss function that does not satisfy IRO. In the semi-continuous
domain, however, \eqref{Eq:noISI} does satisfy IRO. This is because the impact
of applying piecewise-diffeomorphic deformations on discrete domains is quite
different to applying them on continuous domains. In the continuous domain,
such functions can stretch and contract the probability space, causing
probability densities, such as those used in \eqref{Eq:noIROcont}, to change.
In the discrete domain, no such effects are possible: the probabilities used
in \eqref{Eq:noISI} do not change as a result of deformations of the
observation space, and so IRO is met.

However, as before, we can use the Hessian, which in this case is
\[
H_L^\theta(i,j)=\mathbf{E}_{\boldsymbol{x}\sim P_\theta}\left[2\left(\frac{\partial P_\theta(\boldsymbol{x})}{\partial \theta(i)}\right)\left(\frac{\partial P_\theta(\boldsymbol{x})}{\partial \theta(j)}\right)\right],
\]
to show that the resulting estimator is different to WF, proving the necessity
of ISI.

Lastly, we want to prove that IRO is necessary. For this, let
$x(1:k)$ be the value of $x$'s first $k$ dimensions, let $x(k)$ be the value of
its $k$'th dimension alone, and for a distribution $P$, if $x\sim P$,
let $P^y_k$ be the
distribution of $x(k)$ given that $x(1:k-1)=y$. Consider, now, a function
$L_N(P,Q)$ calculated in the following way.
\[
L_N(P,Q)=\sum_{k=1}^{N} \mathbf{E}_{\boldsymbol{x}\sim Q}\left[L_1\left(P^{\boldsymbol{x}(1:k-1)}_k,Q^{\boldsymbol{x}(1:k-1)}_k\right)\right],
\]
where we reuse the function $L_1$ from \eqref{Eq:noISI}, but use it only to
compare between distributions over a one-dimensional observation space.

By construction, $L_N$ satisfies ISI for any $L_1$, because any dimension
that does not add information also does not add to the value of $L_N(P,Q)$.
Therefore, it satisfies all of IRP, IIA and ISI.

However, as was already demonstrated regarding $L_1$ in the one-dimensional
case, it does not lead to WF, proving the necessity of IRO.
\end{proof}

\begin{theorem}[Theorem~4.7 of the main text]\label{T:axiom0}
MLE satisfies IRP, IRO, IIA and ISI.
\end{theorem}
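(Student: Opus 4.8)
The plan is to show that MLE can be realised as $\hat{\theta}^L_{\text{EIC}}$ for a loss function $L$ that satisfies all four axioms, which, by the definition given just before the statement, is exactly what it means for MLE to satisfy IRP, IRO, IIA and ISI. The key observation is that MLE is simply $\hat{\theta}_{\text{PMLE}(g)}$ for the constant function $g \equiv 1$. By Theorem~\ref{T:PMLE}, every PMLE estimator (that is well-defined) arises as an EIC estimator over some smooth, discriminative loss function, and the construction there is explicit: one takes $L(\theta_1,\theta_2) = (f(\theta_2)/g(\theta_2))^{2/M} L_2(\theta_1,\theta_2)$. So my first step is to instantiate that construction with $g \equiv 1$, giving the candidate loss $L(\theta_1,\theta_2) = f(\theta_2)^{2/M}\,|\theta_1-\theta_2|^2$, and to record that, by the computation in the proof of Theorem~\ref{T:PMLE}, $\hat{\theta}^L_{\text{EIC}} = \hat{\theta}_{\text{PMLE}(1)} = \hat{\theta}_{\text{MLE}}$.

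The substance of the proof is then to verify that this particular $L$ satisfies IRP, IRO, IIA and ISI. Here the crucial point — which I expect to be the main obstacle and the part needing the most care — is that the prior density $f(\theta_2)$ appearing in $L$ is not an arbitrary fixed function but the prior of whatever estimation problem $L$ is being applied to; since a "loss function" in this paper's formalism is a family indexed by estimation problems, $L_{(\boldsymbol{x},\boldsymbol{\theta})}(\theta_1,\theta_2) \defeq (f^{(\boldsymbol{x},\boldsymbol{\theta})}(\theta_2))^{2/M}|\theta_1-\theta_2|^2$ is a legitimate definition. I would check each axiom against the transformation it governs: under a reparametrisation $F$ of $\Theta$ (IRP), the prior transforms by the Jacobian factor $|J_F|^{-1}$ and the squared-distance term is not invariant, but I need the combined $\hat\theta^L_{\text{EIC}}$-level invariance — and in fact it is cleaner to note that the estimator $\hat\theta_{\text{MLE}}$ manifestly commutes with reparametrisation of $\Theta$ and is manifestly independent of the representation of $X$, of the prior, and of superfluous conditionally-independent noise $\boldsymbol{y}$, because the likelihood $f(x|\theta)$ is unaffected by all of these.

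This suggests the cleanest route: rather than grinding through the invariance properties of the constructed $L$ directly, I would argue at the estimator level. For IRO and ISI, the definition of "satisfies P" only requires the existence of one loss function $L$ with $\hat\theta^L_{\text{EIC}} = \hat\theta_{\text{MLE}}$ and $L$ satisfying P; since both IRO and ISI are statements purely about how $L_{(\boldsymbol{x},\boldsymbol{\theta})}$ depends on the representation/augmentation of $\boldsymbol{x}$, and our $L$ depends on $\boldsymbol{x}$ only through $f^{(\boldsymbol{x},\boldsymbol{\theta})}(\theta_2)$ — the prior, which is unchanged by relabelling $\boldsymbol{x}$ or adjoining independent $\boldsymbol{y}$ — both hold essentially by inspection. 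For IIA, the relevant data (prior at $\theta_1,\theta_2$ and data distributions at $\theta_1,\theta_2$, plus $M$) is exactly what $L_{(\boldsymbol{x},\boldsymbol{\theta})}(\theta_1,\theta_2) = f(\theta_2)^{2/M}|\theta_1-\theta_2|^2$ depends on, so IIA holds. The genuinely delicate case is IRP, since $f(\theta_2)^{2/M}|\theta_1-\theta_2|^2$ is \emph{not} invariant under diffeomorphisms of $\Theta$ — so the constructed $L$ from Theorem~\ref{T:PMLE} does not itself satisfy IRP, and I would instead need a different loss function realising MLE that does.

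The resolution of the IRP obstacle is to exploit the freedom in the choice of $L$: I want some $L$ satisfying IRP with $\hat\theta^L_{\text{EIC}} = \hat\theta_{\text{MLE}}$. From the EIC formula, $\hat\theta^L_{\text{EIC}} = \argmax_\theta f(\theta|x)/\sqrt{|H_L^\theta|}$, so I need $\sqrt{|H_L^\theta|} \propto f(\theta|x)/f(x|\theta) = f(\theta)$ up to a constant, i.e. $|H_L^\theta| \propto f(\theta)^2$; and for IRP I need $L_{(\boldsymbol{x},\boldsymbol{\theta})}$ to transform correctly under reparametrisation, which forces $|H_L^\theta|$ to transform as $|J_F|^{-2}$ — and $f(\theta)^2$ transforms exactly that way. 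So the required loss is one whose Hessian-determinant is (a constant times) the squared prior density; the Fisher-information-based construction behind WF is the prototype, and replacing $\mathcal{I}_\theta$ there by an appropriately normalised quantity built from the prior is what is needed. I would make this precise by taking $L$ to be an $f$-divergence-like or information-geometric loss whose local quadratic form at $\theta$ is $f(\theta)^{2/M}$ times a coordinate-covariant metric, check via Lemma~\ref{L:likelihood}-style reasoning that it satisfies IRP (and the other three axioms), and conclude $\hat\theta^L_{\text{EIC}} = \hat\theta_{\text{MLE}}$ from the EIC formula. The main obstacle, then, is constructing this IRP-respecting loss and verifying it is a bona fide smooth loss function; the remaining three axioms are routine once the construction depends on $\boldsymbol{x}$ only through the prior.
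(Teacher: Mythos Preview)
Your approach eventually converges to the paper's --- one loss (the $L(\theta_1,\theta_2)=f(\theta_2)^{2/M}|\theta_1-\theta_2|^2$ from Theorem~\ref{T:PMLE}) witnesses IRO, IIA and ISI, and a separate loss witnesses IRP --- but your framing contains a genuine error. The definition preceding the theorem is \emph{per property}: $\hat\theta$ satisfies P if there exists some $L$ (allowed to depend on P) with $\hat\theta=\hat\theta^L_{\text{EIC}}$ and $L$ satisfying P. Your opening sentence instead asks for a single $L$ satisfying all four simultaneously; but any such $L$ (under the standard regularity assumptions) would, by Theorem~\ref{T:main}, force $\hat\theta^L_{\text{EIC}}=\hat\theta_{\text{WF}}\neq\hat\theta_{\text{MLE}}$. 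This is not incidental: the whole point of Theorem~\ref{T:axiom0} is to show that the loss principle is \emph{necessary} precisely because, once different losses may be used for different properties, MLE slips through. Your final parenthetical --- ``check \ldots\ that it satisfies IRP (and the other three axioms)'' --- repeats the same mistake: the IRP-witness you seek \emph{cannot} also satisfy the other three, by the same theorem.

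For the IRP witness itself, ``an $f$-divergence-like loss'' points in the wrong direction: $f$-divergences (under the mild conditions of Theorem~\ref{T:feasibility}) satisfy all four axioms and therefore realise WF, not MLE. Your constraint $|H_L^\theta|\propto f(\theta)^2$ is correct, and since $f(\theta)$ is the \emph{prior} density, the witness must be built from the prior rather than from the data distributions --- for instance, in one dimension, $L(\theta_1,\theta_2)=(\Lambda(\theta_1)-\Lambda(\theta_2))^2$ with $\Lambda$ the prior cdf has $H_L^\theta=2f(\theta)^2$ and is manifestly IRP-invariant (while, as it must, violating IIA). You have identified the right target but not yet supplied a construction; the paper provides one in the supplement.
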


\begin{proof}
Consider loss functions $L$ of the form
\[
L(\theta_1, \theta_2)=\gamma(\theta_2)L'(\theta_1, \theta_2),
\]
where $L'$ is a smooth and problem-continuous loss function, sensitive on
the relevant problem type.

In the proof to Theorem~2.8 of the main text, we showed that any PMLE
estimator can be described in this way, with $L'$ being quadratic loss.
Substituting the identity function for $g$ in (2.7), we get that MLE
can be described by
\[
\gamma(\theta_2) = f(\theta_2)^{2/M}.
\]

This $L$ satisfies IRO, IIA and ISI, but not IRP. We conclude from it that
MLE satisfies IRO, IIA and ISI.

To prove that MLE also satisfies IRP, let $L'$ be squared Hellinger distance
and let
\[
\gamma(\theta_2) = \left(\frac{f(\theta_2)}{\sqrt{|\mathcal{I}_{\theta_2}|}}\right)^{2/M}.
\]
This $L$, too, leads to MLE, because
$H_{L'}^{\theta_2}$ equals $\mathcal{I}_{\theta_2}$.

The new $L$ satisfies IRP because both $L'$ and $\gamma$ are invariant to the
representation of $\theta$.
\end{proof}

We note that even
though the theorem is solely about MLE, our construction is equally applicable
to any PMLE estimator whose penalty function $g$ satisfies the four expected
invariance properties.

\clearpage

\section{Supplementary discussion}\label{S:discussion}

\subsection{Comparison with MML}\label{SS:MMLcomp}

In the main text we discuss axioms that give rise to the
Wallace-Freeman point estimator (WF). This estimator was originally developed
in the Minimum Message Length (MML) literature, where it is most commonly
referred to as the Wallace-Freeman approximation.

In this section we discuss in greater detail the relationship between
Wallace-Freeman estimation as it appears in the MML literature, and how it
appears here, in the context of Error Intolerant Estimation.

\subsubsection{Introduction to MML}
MML is a general name for any member of the family of Bayesian statistical
inference methods based on the minimum message length principle. They are
closely related to the family of minimum description length (MDL) estimators
\citep{grunwald2007minimum,rissanen1987stochastic,rissanen1999hypothesis},
but predate them.

The minimum message length principle
was first introduced in \citet{WallaceBoulton1968}, and the
estimator that follows the principle directly, which was first described in
\citet{wallace1975invariant}, is known as Strict MML (SMML).

SMML can be defined as follows.
Given an estimation problem $(\boldsymbol{x},\boldsymbol{\theta})$ and given
an observation, $x$, we wish
to choose $\hat{\theta}(x)$ so that it optimally
trades off two ideals. First, it must be a good approximation to $\theta$,
the true value of $\boldsymbol{\theta}$, in that the distribution of
$\boldsymbol{x}$ given $\boldsymbol{\theta}=\hat{\theta}(x)$ is a good
approximation to its distribution given $\boldsymbol{\theta}=\theta$.
Second, the choice of $\hat{\theta}$ must be ``simple'', in the sense that
it upholds the ideal of Occam's razor. Both these ideals can be formulated
in information-theoretic terms.

Namely, suppose we wish to communicate both our estimate $\hat{\theta}(x)$
and the observation $x$, and suppose, further, that we choose for this the
following protocol known as the MML \emph{two-part message}.
First, we communicate $\hat{\theta}(x)$ via the optimal
communication protocol. (The choice of this optimal protocol is different,
depending on what the function $\hat{\theta}$ is.)
Second, we communicate $x$ via a protocol that is optimal only under the
assumption that $\boldsymbol{\theta}=\hat{\theta}(x)$.

The expected length of the first part of the message is the
Shannon entropy \citep{shannon1948} of $\hat{\theta}(\boldsymbol{x})$.
Thus, a ``simpler'' function
$\hat{\theta}:X\to\mathbb{R}^M$ will result in a shorter first part message
in expectation.
On the other hand, the second part of the message introduces inefficiencies
due to its approximation that $\boldsymbol{\theta}=\hat{\theta}(x)$. Thus,
the better the approximation the shorter the expected length of the
second part of the message becomes.

Strict MML chooses the function $\hat{\theta}$ so that the overall length
of the two-part message is minimised in expectation, creating, according to
MML theory, an optimal trade-off between simplicity and accuracy.

Notably, this construction only works if $X$ is countable (because otherwise
the second part of the message becomes infinite) and always results in a
$\hat{\theta}$ function whose range is only a countable subset of $\Theta$,
even if $\Theta$ itself is continuous (because otherwise the first part
of the message becomes infinite).

SMML is, however, used also for continuous $X$. This is done by extending
the construction above in one of several equivalent ways. For example,
instead of optimising the message length, it is possible to optimise the
\emph{excess message length}, which is the difference between the expected
message length and the information-theoretical optimum to deliver the
information without restrictions on the message format. This optimum is
the Shannon entropy of $\boldsymbol{x}$ and is
independent of the choice of $\hat{\theta}$. By subtracting this entropy,
the excess message length can be represented as a
Kullback-Leibler divergence. Because this divergence is computable also
on problems with a continuous $X$, it provides a natural way to extend the
definition of SMML to the continuous domain.

Nevertheless, even in those cases, SMML will advocate a $\hat{\theta}$ whose
range is countable.

\subsubsection{Wallace-Freeman Estimation}

Strict MML, as a representation-invariant Bayesian point-estimation method, has
much theoretical appeal, but it cannot be used in practice because it is
computationally and analytically intractable in all but a select few
single-parameter cases \citep{dowty2015smml}.
It was proved to be NP-Hard to compute even in estimating the parameters of
a trinomial distribution \citep{farrwallace2002}.

Even beyond the computational problems, SMML is a difficult estimate to
work with. For example, it provides piecewise-constant estimates and can be
asymmetrical even when working on an estimation problem exhibiting symmetry.

\citet{WallaceFreeman1987} addressed these problems by developing
the Wallace-Freeman estimator (WF) as a computationally convenient
approximation to SMML. The approximation itself makes many assumptions
regarding the underlying estimation problem, and, furthermore, makes no
guarantees regarding the inaccuracy incurred by using it. The main
guarantees given are that WF, like SMML, is representation-invariant, and
that WF is in general not identical to SMML, because WF's estimates are a
continuous function of $x$ for problems with a smooth
$(\boldsymbol{x},\boldsymbol{\theta})$, whereas SMML is piecewise constant.

\subsubsection{The evolution of MML justifications}

Interestingly, though the MML criterion, as described above, is at heart an
information-theoretic criterion, when \citet{wallace1975invariant} describe
their motivation, it is not an information-theoretic one. The paper sets out
with a motivation remarkably similar to ours here: it aims to extend
discrete MAP into the continuous domain, while preserving certain good
properties of discrete MAP that are not met by continuous MAP.
\citeauthor{wallace1975invariant} reference, specifically, the idea of
representation invariance. They do not, however, provide any theoretical
rationale regarding why one should start with the discrete MAP estimate,
other than the following:

\begin{quote}
We do not here advance any argument in favour of [discrete MAP], save to note
that it in some sense yields the most plausible, or least improbable account
of what has been observed.
\end{quote}

\citeauthor{wallace1975invariant} derive Strict MML (SMML)
as a representation invariant extension of discrete MAP by
several approximations, chief of which is the discretisation of the
observation space. This particular approximation is a choice that makes a
significant difference to the meaning of representation invariance, and
the approximate nature of the resulting solution can be observed directly,
e.g.\ when
applying it on a discrete problem, where discrete MAP can also be used:
the SMML estimate and the discrete MAP estimate are not the same.

At the conclusion of \citet{wallace1975invariant}, however, the paper points
out that
the SMML estimate can be given a different interpretation, this time from an
information-theoretic perspective rather than a Bayesian perspective, and
under this alternate description SMML becomes an exact solution.

According to \citet{Dowe2008a}, this dual justification is not a fluke, but a
reflection of the different authors' perspectives:
\begin{quote}
Chris [Wallace] was already a Bayesian from his mid-20s in the 1950s while
David Boulton was clearly talking here in the spirit of (en)coding. Story has
it that they had their separate approaches, went away and did their mathematics
separately, re-convened about 6 weeks later and found out that they were doing
essentially the same thing.
\end{quote}

The paper then goes on to quote Wallace himself, from a talk given 2003,
describing the birth of SMML as a synthesis of these two ideas:
\begin{quote}
\verb|[|W\verb|]|e came together again and I looked at his maths and he looked
at my maths
and we discovered that we had ended up at the same place. There really wasn't
any difference between doing Bayesian analyses the way I thought one ought
to do Bayesian analyses and doing data compression the way he thought you
ought to do data compression. Great light dawned\ldots
\end{quote}

By the time of the writing of the other seminal paper of MML,
 \citet{WallaceFreeman1987contribution,WallaceFreeman1987}, where the
Wallace-Freeman estimator was first introduced, the information-theoretic view
of MML was already more deeply entrenched, but the authors nevertheless
use justifications that align remarkably well with the justifications for
error intolerant estimation.

They note, for example, as a weakness of the Bayes estimator approach that it
can yield an estimate outside of the given parameter space, and stress the
usefulness of their method in settling on a single hypothesis rather than on
a weighted mixture of possibilities, even bringing up, as we do here, the
need for this in scientific inference. They write:
\begin{quote}
\verb|[|W\verb|]|ould we be happy with a scientist who proposed a Bayesian
mixture of a
countably infinite set of incompatible models for electromagnetic fields?
\end{quote}

Thus, the early motivations for MML align remarkably well with our motivations
in deriving error intolerant estimation. However, they rely on a cascade of
approximations, first from Discrete MAP to SMML, and then from SMML to WF.
Regarding the second of these, \citeauthor{WallaceFreeman1987} write:
\begin{quote}
Even if the approximations used in this paper cannot be justified, the general
concepts can still be applied.
\end{quote}

In other words, the authors acknowledge the lack of theoretical justification
for their approximations, leave room for the possibility that better
approximations can be found, and stress that the importance is in the
underlying concepts that these results embody.

Throughout the years, other good properties were alleged for SMML and WF,
which would have provided additional justification for them. For example, a
general form of consistency for MML results was argued by
\citet{dowe1997resolving}. However, these good properties have since been
refuted for both WF and Strict MML itself
\citep{brand2019mml}.

By 2005, when \citet{Wallace2005}, the canonical text of MML literature, was
published, the information-theoretic view of MML was the orthodox view. Thus,
\citet{Wallace2005} describes Strict MML as an \emph{exact} solution, not as
a derivative of Discrete MAP, and WF as an approximation thereof.

In the same year, \citet{comley200511} advocated a simple recipe for the MML
practitioner to follow: if one's estimation problem has continuous parameters,
one should use WF; if it is discrete---use Discrete MAP. Thus, while
originally (working with a Bayesian motivation) Discrete MAP was the ideal
starting point, and SMML---a derivative thereof, now, from an
information-theoretic lens, SMML was seen as the ideal, and both Discrete MAP
and WF were viewed as its approximations.

Figure~\ref{F:comparison} summarises this discussion, providing a graphical
depiction of the relationships between Error Intolerant Estimation and MML,
in their
view of the Wallace-Freeman estimator: while MML sees the connections between
Discrete MAP and WF as approximative and going through SMML as an
intermediary, whether in the original Bayesian view in which these results
were first presented or in the information-theoretic view that was later
adopted, Error Intolerant Estimation derives exactly both Discrete MAP and the
Wallace-Freeman estimator (which is not an ``approximation'' in this context),
and does so using purely Bayesian reasoning and the presented axioms.

\begin{figure}[htb!]
\begin{center}
\begin{tikzpicture}[scale=0.5]
\node[draw,rectangle, fill=gray!10, minimum width=2.5cm, minimum height=1cm](DMAP) at (0,12) {Discrete MAP};
\node[draw,rectangle, fill=gray!10, minimum width=2.5cm, minimum height=1cm](SMML) at (17,12) {Strict MML};
\node[draw,ellipse, fill=black, text=white, minimum width=3cm, minimum height=1.8cm, align=center](RA) at (0,0) {Error Intolerant\\ Estimation};
\node[draw,rectangle, fill=gray!10, minimum width=2.5cm, minimum height=1cm](WF) at (17,0) {Wallace-Freeman};

\draw[-latex, ultra thick,decorate,decoration={snake,post=lineto,post length=8pt},gray!50] ([yshift=3mm]DMAP.east) -- ([yshift=3mm]SMML.west) node [midway, above] {\citep{wallace1975invariant}};
\draw[-latex, ultra thick,decorate,decoration={snake,post=lineto,post length=8pt},gray!80] ([yshift=-3mm]SMML.west) -- ([yshift=-3mm]DMAP.east) node [midway,below] {\citep{comley200511}};
\draw[-latex, ultra thick,decorate,decoration={snake,post=lineto,post length=8pt},gray!50] ([xshift=7.5mm]SMML.south) -- ([xshift=7.5mm]WF.north) node [midway,sloped,above] {\citep{WallaceFreeman1987}};
\draw[-latex, ultra thick,decorate,decoration={snake,post=lineto,post length=8pt},gray!80] ([xshift=-7.5mm]SMML.south) -- ([xshift=-7.5mm]WF.north) node [midway,sloped,below] {\citep{Wallace2005}};
\draw[-latex, line width=2mm] ([yshift=-0.2mm]RA.north) -- (DMAP);
\draw[-latex, line width=2mm] ([xshift=-1mm]RA.east)  -- (WF);
\end{tikzpicture}
\end{center}
\caption{A graphical comparison of Error Intolerant Estimation's derivation of
the Wallace-Freeman estimator with MML's. Squiggly lines represent
approximations and straight lines represent exact derivations. In light grey:
When Strict MML was first introduced in \citet{wallace1975invariant},
Discrete MAP was taken as the ideal estimator, and SMML was derived as an
approximation. Similarly, when WF was first introduced in
\citet{WallaceFreeman1987}, it was a further, secondary approximation, but
still in keeping with the view that these estimators formalise the ideal of
finding a ``most plausible'' estimate. In dark grey: More modern MML texts
\citep{comley200511,Wallace2005} consider Strict MML to
be the ideal estimator, and consider both Discrete MAP and WF its
approximations. In black: Error Intolerant Estimation, on the other hand,
derives both Discrete MAP and WF directly and independently from its axiomatic
principles, and requires for this neither approximations nor
intermediaries.}\label{F:comparison}
\end{figure}
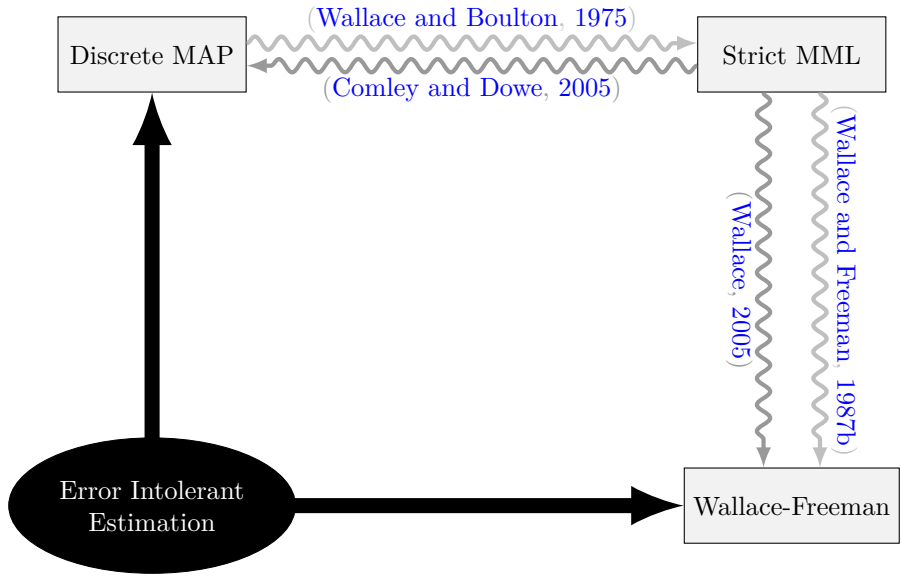

We believe this places the methodology of working with Discrete MAP in the
discrete setting and with Wallace-Freeman estimation in a continuous setting
on much stronger theoretical footing.

It not only explains the many successes of this recipe over the past 30
years, of which recent examples include
\citet{bregu2021online, bregu2021mixture,hlavavckova2020heterogeneous,bourouis2021color,sumanaweera2018bits,schmidt2016minimum,saikrishna2016statistical,jin2015two,kasarapu2014representing},
it also explains the recipe's failings, because it provides robust reasoning
regarding why the method works, and therefore when we should expect it to
underperform.

For example, in \citet{webb2016multiple} the WF results are less competitive,
but this is clear when analysed from an Error Intolerant Estimation perspective:
the paper utilises WF in a context where inferences have to be made piecewise,
serially. The success criterion in such a study is ``best-fit for later
inferences'', not an error intolerant choice of a most-plausible ``truth''.
Because of this, the error intolerance criterion is ill-fitting for the task,
and the WF results, being overly conservative, were outperformed by other
methods. MML reasoning alone does not predict this.

\FloatBarrier

\subsection{Error intolerance and trade-off based methods}\label{SS:tradeoff}

In the main paper, we describe error intolerant estimation as a ``bet-the-ranch''
type situation, where every mistake is equally and prohibitively bad, and
mention that trade-off based methods are an ill-fit for such a scenario.

In this supplementary section, we expand on this description in greater
detail, and provide intuition regarding why setting all mistakes as equally
bad avoids such trade-offs.

\subsubsection{A trade-off example}\label{SS:motivation}

Consider a physicist measuring extremely low electrical charges as part of an
experiment. The physicist is trying to determine whether only one electron
was discharged as part of the experiment ($\theta=1$), or whether it was two
electrons ($\theta=2$).
For simplicity, let us assume these are the only choices, and that there
is no reason to consider either option more likely a priori.

If the physicist's equipment measures the electric charge discharged in the
experiment as equivalent to $1.5$ electrons (which we'll take to mean that
the probability of either event is the same) and if the costs of either
possible type of mistake is the same, most estimation methods will have
difficulty deciding which way to go.

\citet{lamport2012buridan} named this problem \emph{Buridan's principle}, in
reference to the dilemma of Buridan's ass (named after the fourteenth century
French philosopher Jean Buridan) where an ass placed equidistant between two
bales of hay must starve to death because it has no reason to choose one bale
over the other.

When using Bayes estimators, however, the problem becomes much worse.
Consider \emph{any} value, $x$, reported by the measuring equipment.
Consider \emph{any} non-trivial joint probability distribution on
$(\boldsymbol{x}, \boldsymbol{\theta})$. For example, suppose
that the posterior distribution given the data is
$\mathbf{P}(\boldsymbol{\theta}=1|x)=0.9$ and
$\mathbf{P}(\boldsymbol{\theta}=2|x)=0.1$.
Consider, further, any choice of a loss function (as long as loss increases
super-linearly with error magnitude). For example, let us assume quadratic
loss:
\[
L(\theta_1,\theta_2)=|\theta_1-\theta_2|^2.
\]

Recall that a Bayes estimator is defined as a minimiser of expected loss:
\[
\hat{\theta}^L_{\text{Bayes}}(x)\defeq\argmin_{\theta\in\Theta} \mathbf{E}[L(\boldsymbol{\theta},\theta)|\boldsymbol{x}=x].
\]
So, for example, the expected loss for the choice $\hat{\theta}(x)=1$ is
\[
0.9\times L(1,1) + 0.1\times L(2,1) = 0.9 \times 0 + 0.1 \times 1 = 0.1,
\]
which is clearly superior to the expected loss for the choice $\hat{\theta}(x)=2$,
\[
0.9\times L(1,2) + 0.1\times L(2,2) = 0.9 \times 1 + 0.1 \times 0 = 0.9.
\]
Given that our chosen $L$ is defined not only over $\Theta\times\Theta$ but
rather over $\mathbb{R}^M\times\mathbb{R}^M$, let us, however, consider what
would happen if we extend the definition of our Bayes estimator to
\[
\argmin_{\theta\in\mathbb{R}^M} \mathbf{E}[L(\boldsymbol{\theta},\theta)|\boldsymbol{x}=x]
\]
(noting that in
general Bayes inference, the space of actions, which is here the possible
estimation values, is unrelated to the hypothesis space, $\Theta$.)

When all $\mathbb{R}^M$ is considered, the optimal choice is
$\hat{\theta}(x) = 1.1$, for which the expected loss is only
\[
0.9\times L(1, 1.1) + 0.1\times L(2, 1.1) = 0.9 \times 0.01 + 0.1 \times 0.81 = 0.09.
\]
So, in this case, such extended Bayes estimation would have advocated a value
in the open interval $(1,2)$, a value that clearly cannot be the true value of
$\theta$ because it lies outside $\Theta$.

One can repeat this same example with any choice of observations, any choice of
positive posterior probabilities, and any choice of loss function that is
super-linear with distance, and the estimate will never be within
$\Theta$.

This is a general problem with trade-off-based estimation methods, and while
it is particularly jarring in cases of discrete estimation or of continuous
estimation over a non-convex $\Theta$, where the estimation can fall outside
of $\Theta$ altogether, the issue exists in all estimation problems, and does
not require any extension to the definition of Bayes estimation. Consider
exactly the same situation as before, except this time let us define
$\Theta=[1,2]$, and choose our prior so that the probability that $\theta$
is in the open interval $(1,2)$ is negligible.\footnote{This could model, e.g.,
a situation in which the scientist, being a scientist, allows some tiny
probability to the possibility that everything we think we know about the
discrete nature of electric charges will ultimately be proven false.}
The Bayes estimate (this time, without any extension to its definition)
will be the same as in the extended Bayes one: $1.1$, in our example.

Thus, a Bayes estimator is able
to choose a highly unlikely $\theta$, simply because it is a convenient
trade-off between other $\theta$ values, each of which is far more likely
than the one actually chosen.
In error intolerant estimation we specifically aim to avoid such
trade-off based decisions.

\subsubsection{How error intolerance limits trade-offs}

In the proofs of both Theorem~2.6 and Theorem~\ref{T:tcontinuous}, we
transform the risk attitude functions, $T_\epsilon$, into new functions,
$A_\epsilon$, that map from objective loss to subjective utility.

One way to interpret this rescaling is that we use a loss function, $L$, to
determine how similar or different $\theta$ values are to each other, and then
use an attenuation function, $A_\epsilon$, to translate this divergence into a
\emph{similarity measure}: a $1$ indicates an exact match and a $0$ that
the two $\theta$ are not materially similar.

This rescaling gives a better intuitive understanding of why lowering
one's error tolerance leads to decisions more appropriate to the
error intolerance scenario: for each $\theta_1$, the neighbourhood in which
$A_\epsilon(L(\theta_1, \theta_2))$ is positive is the neighbourhood for which
mistaking $\theta_1$ for $\theta_2$ is at all an acceptable error. Beyond that,
the utility of choosing a given $\theta_2$ is zero.
Lowering the error tolerance contracts this neighbourhood of partial similarity;
at the limit, anything that is not ``essentially identical'' to $\theta_1$
according to the loss function contributes zero utility, the minimal possible
value. The optimal decision rule in such a scenario is therefore one that
strives, as much as possible, to avoid this possibility, and therefore
maximises the probability that its decision is, essentially, ``exactly right''.

There is, in particular, no reason for such an estimator to ever
produce an estimate $\hat{\theta}(x)$ outside $\Theta$ even when such values
are considered, because the
neighbourhood of $\theta$ values for which
$A_\epsilon(L(\theta, \hat{\theta}(x)))$ is positive
will ultimately shrink to exclude all $\theta\notin\Theta$, thus yielding for
such a decision a utility of zero.

It is interesting to note that while the loss function, $L$, retains in
error intolerant estimation its basic meaning from Bayes estimation,
it changes how it impacts the estimator. Unlike in standard Bayesian
risk minimisation, in our scenario the only parts of the loss function that are
of interest are those within an $\epsilon$-neighbourhood of the diagonal. One
can think of this as using the loss function to convey the local topology of
the hypothesis space. This is because error intolerant estimation does not concern
itself with errors larger than any $\epsilon$: these are all considered equally
``wrong''.

\subsection{Rationale of the axioms on loss}\label{SS:rationale}

In the main paper we introduce four axioms on the choice of a loss function
to be used in error intolerant estimation.
In this section, we provide a deeper analysis of the rationale and
incontrovertibility of each.

\subsubsection{IRP}

Our first axiom is Invariance to Representation of Parameter Space (IRP).
Informally, it states that our loss evaluation (and therefore
our choice of estimate) should not depend on how we name our hypotheses.

While the IRP axiom may seem highly intuitive, it is actually the one axiom
among our four for which it is least obvious why it is ``incontrovertible''.
After all, common loss functions in use, such as quadratic loss, do not satisfy
it.

Not only that, but the very fact that common loss functions do not satisfy
IRP is often put to good use. When estimating, say, the size of a cohort
it makes a big difference whether one is interested in getting the answer
right in terms of the minimum error in number of people or in terms of
minimum error in percents. One favours the use of a linear representation,
the other---a logarithmic one.

As another example, consider estimation of a probability. When estimating
the probability that a person is likely to respond to a marketing message,
the error needs to be measured on a linear scale, because the underlying
analysis is about maximising the expected size of the total cohort that
will respond to the message. On the other hand, when estimating the
probability that a dangerous experiment has crossed its safety bounds, the
difference between a probability of $0.001\%$ and $0.0001\%$ is far more
substantial than between $4\%$ and $5\%$. Again, a logarithmic scale
becomes more appropriate, in order to accentuate the difference between
near-zero figures. In yet other domains, both near-zero and near-one
probability ranges will need to be magnified, and a log-odds metric may
become appropriate.

In light of this, what is the justification for IRP?

The important element to note is that in all these examples the choice of
estimation method was led by a desire to manage errors in particular ways.
This is not the case in error intolerant estimation. Here, we are in a
bet-the-house situation. Everything rests on the question of whether our
estimate is \emph{correct}, not how well it approximates the correct answer.

In such a situation, a person faced with, for example, the need to estimate
the side length of a cube and estimating it to be, say, $2$ metres in length
will undoubtedly answer $8$ cubic metres if asked to estimate the volume of
said cube. Both answers must be informed by that person's view of what the
most likely ``truth'' is regarding the dimensions of the cube, which is why
this truth must always be the same, regardless of how the person is asked
to express it.

We see, therefore, that while IRP is not universally incontrovertible, and
is, in fact, frequently violated, that is not the case within the world
of error intolerant estimation. Here, there is no reason to doubt its veracity.

\subsubsection{IRO}

Our second axiom, Invariance to Representation of Observation Space (IRO),
is also regarding invariance to representation, but
this time regarding the representation of observation space.
Informally, IRO guarantees that the estimate should remain the same
regardless of how the observations are presented to us.

Is IRO incontrovertible?

Statisticians analysing real-world data spend much of their time wrangling this
data. This wrangling, bringing the raw input data into a usable format, is
a process that only works under the assumption that the raw formatting of the
input does not matter: regardless of how the input is initially presented, one
spends the necessary effort to bring the information into the format most
appropriate for the analysis of the problem.

This is precisely the statement of IRO: the choice of the raw input format
should not influence the insights derived from the data.

\subsubsection{IIA}

The third axiom, Invariance to Irrelevant Alternatives (IIA), takes its name
from \citet{nash1950bargaining}, a seminal work in game theory, where it
was introduced in the following formulation: if, when faced with a choice
among alternatives in a set $\Theta$, a certain rational player would choose
$\theta\in\Theta$, then, with all other conditions being equal, the same
rational player would make the same choice if the alternatives
were narrowed down to some $\Theta'\subset\Theta$, as long as
$\theta\in\Theta'$. For example, if, given any evidence $x$, we estimate
that the best place to dig for gold in Australia is, say, in Melbourne,
then IIA states that we should
reach the same conclusion, given the same evidence, also if the question asked
is where to dig for gold in only \emph{mainland} Australia.

Our IIA axiom merely reformulates
this principle so that it addresses the underlying loss function
rather than the estimate itself.

Informally, our formulation of IIA states that the loss experienced when
choosing hypothesis $\theta_2$ in cases where the true value is $\theta_1$
should only relate to the nature of these two hypotheses, not any other
property of the estimation problem. This follows directly from the semantics
of what loss function values are meant to represent.

The IIA axiom has been a hotly debated axiom ever since its introduction
in 1950. In our context, however, it is a direct statement regarding the nature
of the error intolerant scenario: we aim for an estimate that is ``best''
according to its own merits, not based on its relationship with other potential
estimates (as is the case with trade-off-based methods). This was already
stipulated by AIA in the context of the estimator itself, as parameterised
by a loss function, and IIA merely extends this requirement also to the
underlying loss function, and thus to the estimator as a whole.

\subsubsection{ISI}

What ISI states is that merely adding data bits to the observation that do
not convey any information about $\boldsymbol{\theta}$ should not change
our estimate of it.

Like our other axioms, ISI, too, seems nearly
tautological, as one would be hard put to justify why such extra,
information-free data should influence a point estimate in any way.

%% ** The bibliograhy **
\bibliographystyle{ba}
\bibliography{bibwf_SI}

\end{document}